\definecolor{Red}{rgb}{1,0,0}
\DeclareMathAlphabet{\mathbfit}{OT1}{cmr}{bx}{it}
\newcommand{\bea}{\begin{eqnarray*}}
\newcommand{\eea}{\end{eqnarray*}}
\newcommand{\be}{\begin{eqnarray}}
\newcommand{\ee}{\end{eqnarray}}
\newcommand{\beq}{\begin{equation}}
\newcommand{\eeq}{\end{equation}}
\newtheorem{thm}{Theorem}
\newtheorem{rmk}[thm]{Remark}
\newtheorem{lem}[thm]{Lemma}
\newtheorem{fact}{Fact}
\newtheorem{prop}[thm]{Proposition}
\newtheorem{defi}[thm]{Definition}
\begin{document}

\title{The speed of biased random walk among random conductances} 

\author{
{\small Noam Berger, Nina Gantert, Jan Nagel}\\
}

\maketitle

\begin{abstract}We consider biased random walk among iid, uniformly elliptic conductances on $\mathbb{Z}^d$, and investigate the monotonicity of the velocity as a function of the bias. It is not hard to see that if the bias is large enough, the velocity is increasing as a function of the bias.
Our main result is that if the disorder is small, i.e. all the conductances are close enough to each other, the velocity is always strictly increasing as a function of the bias, see Theorem \ref{thm:monotone}.
A crucial ingredient of the proof is a formula for the derivative of the velocity, which can be written as a covariance, see 
Theorem \ref{thm:derivative}: it follows along the lines of the proof of the Einstein relation in \cite{einstein}.
On the other hand, we give a counterexample showing that for iid, uniformly elliptic conductances, the velocity is not always increasing as a function of the bias.
More precisely, if $d=2$ and
if the conductances take the values $1$ (with probability $p$) and $\kappa$ (with probability $1-p$) and $p$ is close enough to $1$ and $\kappa$ small enough, the velocity is {\sl not} increasing as a function of the bias, see Theorem \ref{thm:nonmonotone}.
\end{abstract}

{\bf Keywords :}
{Random walk in random environment, random conductances, effective velocity

\smallskip

{\bf MSC 2010:} {
60K37; 
60J10; 
60K40 
}

\section{Introduction}

As a model for transport in an inhomogeneous medium, one may consider a biased random walk on a supercritical percolation cluster.
The model goes back, to our best knowledge, to Mustansir Barma and Deepak Dhar, see \cite{barmadhar} and \cite{dhar}.
They conjecured the following picture for the velocity (in the direction of the bias) as a function of the bias.
The velocity is increasing for small values of the bias, then it is decreasing to $0$ and remains $0$ for large values of the bias, see 
Figure \ref{pic:speed2} below.
Here, the zero velocity regime is due to ``traps'' in the environment which slow down the random walk.
It was proved by \cite{sznitman2003anisotropic} and by \cite{BGP2003speed} that the velocity is indeed zero if the bias is large enough, while it is strictly positive for small values of the bias. Later, Alexander Fribergh and Alan Hammond were able to show that there is a sharp transition, i.e. there is a critical value of the bias such that the velocity is zero if the bias is larger, and strictly positive if the bias is smaller than the critical value, see \cite {FriHam}.

The velocity of biased random walk among iid, uniformly elliptic conductances is always strictly positive, this was proved by Lian Shen in \cite{shen2002}.
 A criterion for ballisticity in the elliptic, but not uniformly elliptic case can be found in \cite{friberghRCM}.
It is interesting to ask about monotonicity in the uniformly elliptic case. In the following, $v_1(\lambda)$ denotes the component of the velocity in the direction of the bias, precise definitions are below.
In the homogeneous medium (i.e. if the conductances are constant), the velocity can be computed and the picture is as in Figure \ref{pic:speed1}.
\begin{figure}[h] 
\centering
   \includegraphics[width=5cm]{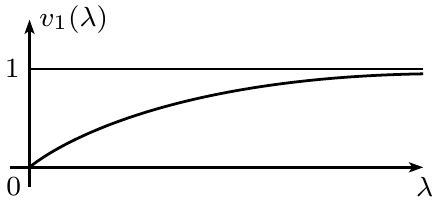}  
   \caption{Speed of biased simple random walk}
   \label{pic:speed1} 
 \end{figure}
For the biased random walk on a (supercritical) percolation cluster, the conjectured picture is as in Figure \ref{pic:speed2}.
\begin{figure}[h] 
\centering
   \includegraphics[width=5cm]{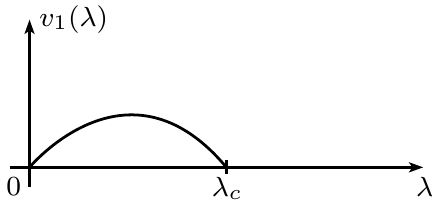}
   \caption{Conjectured speed of biased random walk on percolation clusters}
   \label{pic:speed2}
 \end{figure}
Now, in our case of iid, uniformly elliptic conductances, the picture should be ``in between'' the other two cases.
If the conductances are close enough to each other, we show that the speed is increasing, hence the picture is as in Figure \ref{pic:speed1}.
Under the assumptions of Theorem \ref{thm:nonmonotone}, we show that the speed is not increasing and Figure \ref{pic:speed3} is the simplest picture which agrees with our results.
\begin{figure}[h] 
\centering
   \includegraphics[width=5cm]{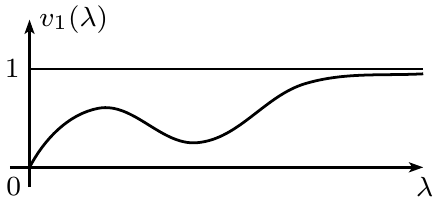}
   \caption{Conjectured speed of biased random walk under the assumptions of Theorem \ref{thm:nonmonotone}}
   \label{pic:speed3}
 \end{figure}
However, we only prove parts of this picture: we know that for $\lambda \to \infty$, the velocity is increasing and goes to $1$, see Fact \ref{fct:mon} below, and we show that the velocity is not increasing for all values of the bias, see Theorem \ref{thm:nonmonotone}.

Finally, let us mention some results for biased random walks on supercritical Galton-Watson trees with a bias pointing away from the root.
This model can be seen as a ``toy model'' for the percolation case, when the lattice is replaced by a tree.
For biased random walks on (supercritical) Galton-Watson trees with leaves, the velocity shows the same regimes as for
biased random walks on percolation clusters: it is zero if the bias is larger than a critical value, while it is strictly positive if the bias is less (or equal) than the critical value.
This transition  was proved by \cite{LPP} and the critical value has an explicit description, see \cite{LPP}. In particular, if the tree has leaves, the velocity can not be an increasing function of the bias.
For biased random walks on supercritical Galton-Watson trees without leaves the velocity is conjectured to be increasing, but despite recent progress, see \cite{BFS2014}, \cite{Aidekon2014}, this conjecture is still open.

Let us now give more precise statements and a description of our results.
For two neighboring vertices $x$ and $y$ in $\mathbb{Z}^d$ with $d\geq 2$, assign to the edge between $x$ and $y$ a nonnegative \emph{conductance} $\omega(x,y)$. The random walk among the conductances $\omega$ starting at $x_0$ and with bias $\lambda \geq 0$ (in direction $e_1 = (1,0,0, \ldots , 0)$)  is then the Markov chain $(X_n)_{n\geq 0}$ with law $P^{x_0}_{\omega,\lambda}$, defined by the transition probabilities 
\begin{align*}
P^{x_0}_{\omega,\lambda}\big( X_{n+1} = y | X_n=x \big) = \frac{\omega(x,y)e^{ \lambda(y-x)\cdot e_1}}{\sum_{z\sim x} \omega(x,z) e^{\lambda(z-x)\cdot e_1}}
\end{align*}
for $x\sim y$. (Here we write $x\sim y$ if $x,y$ are neigboring vertices, and we write $w\cdot z$ for the scalar product of two vectors 
$w, z \in \mathbb{R}^d$). The corresponding expectation is written as $E^{x_0}_{\omega,\lambda}$. The Markov chain $(X_n)_{n\geq 0}$ is reversible with respect to the measure
\begin{align*}
\pi(x) = \sum_{z\sim x} \omega(x,z) e^{\lambda(x+z)\cdot e_1} .
\end{align*}
When the collection of conductances $\omega$ is random with law $P$, we call $(X_n)_{n\geq 0}$ \emph{random walk among random conductances} and $P^{x_0}_{\omega,\lambda}$ the \emph{quenched law}. 
 $\mathbb{P}^{x_0}_\lambda = \int P^{x_0}_{\omega,\lambda}(\cdot ) P(d\omega)$ is called the \emph{annealed law} and we write $\mathbb{E}^{x_0}_\lambda $ for the corresponding expectation. If $x_0=0$ we omit the superscripts. 
In this paper we study properties of the limiting velocity
\begin{equation}\label{speed-def}
v(\lambda)=\lim_{n\to \infty} \frac{X_n}{n}.
\end{equation} 
Frequently, we focus on the speed in direction $e_1$ and set $v_1(\lambda) = v(\lambda) \cdot e_1$. In particular, we are interested in the monotonicty of $v_1$ as a function of the bias $\lambda$. Although increasing $\lambda$ increases the local drift to the right at every point, it is not clear at all that this results in a higher effective velocity. As mentioned above, this conclusion is known to be false for a biased random walk on a percolation cluster, which corresponds to conductances $\omega(x,y)\in \{0,1\}$. As shown by \cite{FriHam}, the speed is positive for $\lambda$ smaller than some critical value $\lambda_c>0$, but increasing the bias further will give zero speed. If we assume the conductances to be uniformly elliptic, that is, there exists a $\delta\in(0,1)$ such that 
\begin{equation}\label{UE}
1-\delta \leq \omega(x,y) \leq 1+\delta ,
\end{equation}
then \cite{shen2002} showed that the limit in \eqref{speed-def} exists $\mathbb{P}_\lambda$ almost surely, does not depend on $\omega$, and there is no zero speed regime: $v_1(\lambda)>0$ for all $\lambda >0$. From now on, we assume 

\medskip

\textbf{Assumption (A)}
The conductances are iid and uniformly elliptic, i.e. they satisfy \eqref{UE}.

\medskip

Note that \eqref{UE} is equivalent to the usual uniform ellipticity saying that the conductances are bounded above and bounded away from $0$: we may multiply all the conductances by a constant factor, resulting in the same transition probabilities.

\medskip

\begin{fact}\label{fct:inf}
$\lim_{\lambda \to \infty} v_1(\lambda)=1$.
\end{fact}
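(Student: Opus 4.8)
The plan is to show that as $\lambda\to\infty$, the walk becomes essentially deterministic: at every vertex the probability of stepping in the $+e_1$ direction tends to $1$ uniformly in the environment, so the normalized displacement $X_n/n$ is forced close to $e_1$. First I would estimate the one-step transition probabilities directly from the definition. Fix a vertex $x$ and a neighbor $y=x+e_1$. In the denominator $\sum_{z\sim x}\omega(x,z)e^{\lambda(z-x)\cdot e_1}$, the term $z=x+e_1$ contributes $\omega(x,x+e_1)e^{\lambda}$, which by \eqref{UE} is at least $(1-\delta)e^{\lambda}$, while every other neighbor $z$ has $(z-x)\cdot e_1\in\{-1,0\}$ and contributes at most $(1+\delta)e^{\lambda\cdot 0}=(1+\delta)$ (there are at most $2d-1$ such terms, exactly one of which, $z=x-e_1$, carries the factor $e^{-\lambda}\le 1$). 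Hence
\begin{equation*}
P^{x_0}_{\omega,\lambda}\big(X_{n+1}-X_n=e_1\mid X_n=x\big)\;\geq\;\frac{(1-\delta)e^{\lambda}}{(1-\delta)e^{\lambda}+(2d-1)(1+\delta)}\;=:\;1-\varepsilon(\lambda),
\end{equation*}
and $\varepsilon(\lambda)\to 0$ as $\lambda\to\infty$, with the bound uniform in $x$ and in $\omega$ (this uses only uniform ellipticity, not the iid structure).

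Next I would transfer this one-step bound to a lower bound on $v_1(\lambda)$. Let $D_k=(X_k-X_{k-1})\cdot e_1\in\{-1,0,1\}$ be the increment in the $e_1$-direction at step $k$. By the bound above, conditionally on the past, $P_{\omega,\lambda}(D_k=1\mid\mathcal F_{k-1})\ge 1-\varepsilon(\lambda)$, hence $E_{\omega,\lambda}[D_k\mid\mathcal F_{k-1}]\ge (1-\varepsilon(\lambda))\cdot 1+\varepsilon(\lambda)\cdot(-1)=1-2\varepsilon(\lambda)$. Therefore $M_n:=X_n\cdot e_1-\sum_{k=1}^n E_{\omega,\lambda}[D_k\mid\mathcal F_{k-1}]$ is a martingale with bounded increments, so $M_n/n\to 0$ a.s.\ (e.g.\ by the law of large numbers for martingale differences, or Azuma–Hoeffding plus Borel–Cantelli). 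Consequently
\begin{equation*}
\liminf_{n\to\infty}\frac{X_n\cdot e_1}{n}\;\ge\;1-2\varepsilon(\lambda)\qquad P_{\omega,\lambda}\text{-a.s., for every }\omega.
\end{equation*}
Since by Shen's result \cite{shen2002} the limit $v_1(\lambda)=\lim_n X_n\cdot e_1/n$ exists $\mathbb P_\lambda$-a.s.\ and is deterministic, we get $v_1(\lambda)\ge 1-2\varepsilon(\lambda)$. Combined with the trivial upper bound $v_1(\lambda)\le 1$ (each increment satisfies $D_k\le 1$, so $X_n\cdot e_1\le n$), letting $\lambda\to\infty$ gives $\lim_{\lambda\to\infty}v_1(\lambda)=1$.

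The only genuinely delicate point is making sure the a.s.\ convergence $X_n/n\to v(\lambda)$ is available so that the quenched $\liminf$ bound upgrades to a statement about the constant $v_1(\lambda)$; this is exactly what \cite{shen2002} provides under Assumption (A), so it may simply be cited. Everything else is an elementary uniform estimate on transition probabilities plus a bounded-increment martingale argument, and no concentration finer than Azuma is needed. One could alternatively avoid martingales entirely by a coupling: dominate the walk from below by a biased nearest-neighbor walk on $\mathbb Z$ with right-step probability $1-\varepsilon(\lambda)$, whose speed $1-2\varepsilon(\lambda)$ is explicit; I would mention this as the quickest route if a self-contained argument is preferred.
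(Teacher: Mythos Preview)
Your proposal is correct and matches the paper's approach: the key observation is exactly your uniform one-step lower bound, which after dividing numerator and denominator by $(1-\delta)$ is identical to the paper's \eqref{lowerbounde1}. The paper then invokes the coupling with a homogeneous walk (your ``quickest route'' at the end) rather than the martingale argument, but the content is the same.
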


\begin{fact}\label{fct:mon}
There exists a $\lambda_c=\lambda_c(\delta)$ such that $v_1$ is strictly increasing on $[\lambda_c,\infty)$. 
\end{fact}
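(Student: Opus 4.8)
The plan is to prove the (stronger) statement that $v_1$ is continuously differentiable on a neighbourhood of $+\infty$ with strictly positive derivative there; that \eqref{speed-def} is increasing on some $[\lambda_c,\infty)$ with $\lambda_c=\lambda_c(\delta)$ then follows. Throughout write $s:=e^{-\lambda}$, and normalise the conductances so that \eqref{UE} holds. Inspecting the transition probabilities directly (divide numerator and denominator by $e^\lambda$), one sees that there is a $\lambda_0=\lambda_0(\delta)$ such that for $\lambda\ge\lambda_0$, uniformly in the environment and the vertex, a step from $x$ to $x+e_1$ has probability $1-O(s)$, to $x-e_1$ probability $O(s^2)$, and each lateral step probability $O(s)$; moreover on this range the forward step probability is non-decreasing and every other step probability is non-increasing in $\lambda$. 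Consequently the one-site drift satisfies, uniformly in $\omega$ and $x$,
\[
 d^\lambda_\omega(x):=E^x_{\omega,\lambda}\big[(X_1-X_0)\cdot e_1\big]=1-s\,h_x(\omega)+O(s^2),\qquad
 h_x(\omega):=\frac{1}{\omega(x,x+e_1)}\sum_{j=2}^d\big(\omega(x,x+e_j)+\omega(x,x-e_j)\big)\ge 0 .
\]

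Next I would invoke the standard representation of the velocity through the environment seen from the particle: there is an invariant, ergodic probability measure $\mathbb Q_\lambda$ for the environment process, equivalent to $P$, with $v_1(\lambda)=\exql[d^\lambda_\omega(0)]$. The heart of the argument is to show that $s\mapsto\mathbb Q_\lambda$ is $C^1$ (in fact real-analytic) near $s=0$ with $\mathbb Q_0=P$ --- as $s\to 0$ the dynamics approach those of the deterministic walk $X_n=ne_1$, whose environment process is stationary under $P$ by translation invariance. This should follow from the regeneration structure of ballistic uniformly elliptic walks (as in \cite{shen2002} and the standard RWRE literature): the one-step estimates above show that for small $s$ the regeneration time has exponential moments bounded uniformly in $s$, which gives a uniformly geometric mixing rate for the environment process and hence a convergent expansion of its invariant measure in powers of $s$. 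Granting this, together with the expansion of $d^\lambda_\omega$,
\[
 v_1(\lambda)=\exql[d^\lambda_\omega(0)]=1-s\,\exql[h_0]+O(s^2)=1-s\,\mathbb E_P[h_0]+O(s^2),
\]
using $\mathbb Q_\lambda=P+O(s)$ on the conductances at the origin. Since the edges $(0,e_1),(0,\pm e_j)$ are distinct, $\mathbb E_P[h_0]=2(d-1)\,\mathbb E_P[\omega(0,e_1)]\,\mathbb E_P[\omega(0,e_1)^{-1}]\ge 2(d-1)>0$ by Jensen's inequality, whence
\[
 \frac{d}{d\lambda}v_1(\lambda)=\mathbb E_P[h_0]\,e^{-\lambda}+O(e^{-2\lambda})>0
\]
for all sufficiently large $\lambda$. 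This gives the claim, and incidentally refines Fact~\ref{fct:inf} to $1-v_1(\lambda)=\mathbb E_P[h_0]\,e^{-\lambda}+O(e^{-2\lambda})$.

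The one-step monotonicity estimates and the first-order bookkeeping for $d^\lambda_\omega$ are routine. The step I expect to be the genuine obstacle is the regularity of the invariant measure $\mathbb Q_\lambda$ as the bias tends to infinity: one has to establish tail bounds on the regeneration time that are \emph{uniform in the bias}, and then justify differentiating $\exql[d^\lambda_\omega(0)]$ in $\lambda$ through the resulting perturbative expansion. If one is willing to invoke the derivative formula of Theorem~\ref{thm:derivative}, there is a shorter route: express $v_1'(\lambda)$ as the covariance appearing there, note that under $\mathbb Q_\lambda$ the functionals entering it differ by $O(e^{-\lambda})$ from their deterministic values at $\lambda=\infty$, and estimate that covariance to leading order to recover $v_1'(\lambda)=\mathbb E_P[h_0]\,e^{-\lambda}+O(e^{-2\lambda})>0$.
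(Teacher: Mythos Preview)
Your approach is genuinely different from the paper's, and the gap you yourself flag is real and not minor.

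The paper does \emph{not} differentiate $v_1$ or analyze the invariant measure $\mathbb Q_\lambda$. Instead it uses a direct coupling: with a common sequence of uniforms one constructs simultaneously $X^{(\omega,\lambda)}$ and $X^{(\omega,\lambda+\varepsilon)}$, together with an auxiliary one-dimensional biased simple random walk $Y$ whose right-steps force both $X$-walks to step right (by the uniform lower bound \eqref{lowerbounde1}). The ``super-regeneration times'' of $Y$ are then common regeneration times for both walks whenever $\lambda>\lambda_s$, and the speed equals $\mathbb E[X_{\tau_1}^{(\omega,\lambda)}\mid R]/\mathbb E[\tau_1\mid R]$ with the \emph{same} denominator for every $\lambda>\lambda_s$. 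Strict monotonicity thus reduces to $\mathbb E\big[(X_{\tau_1}^{(\omega,\lambda+\varepsilon)}-X_{\tau_1}^{(\omega,\lambda)})\cdot e_1\mid R\big]>0$, which is obtained by a combinatorial bookkeeping of how many left-steps $Y$ makes before $\tau_1$, exactly as in \cite{BFS2014}. No smoothness of $v_1$, no analysis of $\mathbb Q_\lambda$, and no asymptotic expansion is needed; the argument is essentially finite.

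By contrast, your route hinges on showing that $s\mapsto\mathbb Q_\lambda$ is $C^1$ (or analytic) at $s=0$ with $\mathbb Q_0=P$. Uniform exponential moments for regeneration times as $\lambda\to\infty$ are indeed available, but this only yields tightness and convergence $\mathbb Q_\lambda\to P$; turning that into a differentiable expansion of $\mathbb E_{\mathbb Q_\lambda}[h_0]$ in $s$ requires an additional perturbation argument (e.g.\ a spectral-gap or implicit-function-theorem statement for the transition operator of the environment process) that you have not supplied. Without it, the step ``$\mathbb Q_\lambda=P+O(s)$ on the conductances at the origin'' is an assertion, not a proof, and your derivative computation does not go through. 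The alternative via Theorem~\ref{thm:derivative} has the same defect: estimating $\operatorname{Cov}_\lambda(M,N)$ to order $e^{-\lambda}$ again requires quantitative control of the limiting law as $\lambda\to\infty$.

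In short: your plan, if completed, would yield the sharper asymptotic $1-v_1(\lambda)=\mathbb E_P[h_0]\,e^{-\lambda}+O(e^{-2\lambda})$, which the paper's coupling does not give; but the paper's coupling argument is complete (modulo the reference to \cite{BFS2014}) and sidesteps precisely the regularity issue you identify as the obstacle.
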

\medskip

Fact \ref{fct:inf} follows from a coupling with a random walk in a homogeneous environment, as 
\begin{align} \label{lowerbounde1}
P_{\omega,\lambda}\big( X_{n+1} = x+e_1 | X_n=x \big) \geq \frac{e^\lambda}{(2d-1)\frac{1+\delta}{1-\delta}+e^\lambda} ,
\end{align}
which goes to 1 as $\lambda \to \infty$. Fact \ref{fct:mon} was proven by \cite{BFS2014} for the biased random walk on a Galton-Watson tree without leaves (where an upper bound for $\lambda_c$ can be explicitly computed), the same arguments yield the analogous result for the conductance model, when the conductances are bounded away from 0 and $\infty$. A sketch of the proof will be given in Section 2. We remark that $\lambda_c(\delta)$ may be chosen decreasing in $\delta$.

Our first main result shows that in the low disorder regime, when $\delta$ is close to 0, $v_1$ is increasing on $[0,\infty)$. That is, in the low disorder regime, Fact \ref{fct:mon} holds with $\lambda_c=0$.

\begin{thm}\label{thm:monotone}
Assume (A). There exists a $\delta_0\in (0,1)$, such that if $1-\delta_0\leq \omega(x,y) \leq 1+\delta_0$ whenever $x\sim y$, then $v_1$ is strictly  increasing.
\end{thm}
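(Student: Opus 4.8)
The plan is to deduce Theorem \ref{thm:monotone} from the derivative formula of Theorem \ref{thm:derivative} by showing that, when $\delta$ is small, the derivative $v_1'(\lambda)$ is strictly positive for every $\lambda$. Write $\bar\omega_n=\tau_{X_n}\omega$ for the environment viewed from the particle, let $\mathbb{Q}_\lambda$ denote its (unique ergodic) invariant law, let $L_\lambda$ be the associated transition operator and $\phi_\lambda(\omega)=E_{\omega,\lambda}[X_1\cdot e_1]$ the local drift, so that $v_1(\lambda)=\exql[\phi_\lambda]$. Let $\psi_\lambda$ solve the Poisson equation $(I-L_\lambda)\psi_\lambda=\phi_\lambda-v_1(\lambda)$; for $\lambda>0$ such a $\psi_\lambda$ is furnished by the regeneration structure of \cite{shen2002} (as in the proof of Theorem \ref{thm:derivative}, which follows \cite{einstein}), and for $\lambda=0$ it is the usual corrector of the reversible conductance model. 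Differentiating $v_1(\lambda)=\exql[\phi_\lambda]$ and using $\mathbb{Q}_\lambda L_\lambda=\mathbb{Q}_\lambda$ to trade the $\lambda$-derivative of $\mathbb{Q}_\lambda$ for $\psi_\lambda$ --- precisely the computation behind Theorem \ref{thm:derivative} --- the derivative takes the form
\begin{align*}
v_1'(\lambda)\;=\;\underbrace{\exql\!\big[\operatorname{Var}_{\omega,\lambda}(X_1\cdot e_1)\big]}_{=:A(\lambda)}\;+\;\underbrace{\exql\!\big[\operatorname{Cov}_{\omega,\lambda}(X_1\cdot e_1,\ \psi_\lambda(\bar\omega_1))\big]}_{=:B(\lambda)} ,
\end{align*}
the first term coming from the explicit $\lambda$-dependence of the one-step kernel and the second from the $\lambda$-dependence of $\mathbb{Q}_\lambda$.

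The term $A(\lambda)$ is the homogeneous part and is harmless: uniform ellipticity forces $P_{\omega,\lambda}(X_1=e_1)$ and $P_{\omega,\lambda}(X_1=-e_1)$ to be bounded below uniformly in $\omega$ (for $\delta\le\delta_0$), whence $A(\lambda)\ge c(\lambda)>0$ with $c$ continuous on $[0,\infty)$ and $c(\lambda)\asymp e^{-\lambda}$ as $\lambda\to\infty$. The role of low disorder is to make $B(\lambda)$ negligible: if $\delta=0$ the environment seen from the particle is deterministic, so $\phi_\lambda$ is constant, $\psi_\lambda\equiv0$, and $B(\lambda)=0$, recovering the strictly positive homogeneous formula $v_1'(\lambda)=A(\lambda)$. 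For $\delta>0$ the oscillation of $\phi_\lambda$ over environments is $O(\delta)$ uniformly in $\lambda$, so $\|\phi_\lambda-v_1(\lambda)\|_\infty=O(\delta)$, and the regeneration (resp.\ Green-operator) representation gives $\|\psi_\lambda\|_{L^2(\mathbb{Q}_\lambda)}\le C\delta\cdot g(\lambda)$, where $g(\lambda)=\sup_\omega E_{\omega,\lambda}[\tau_1]$ (resp.\ the norm of $(I-L_\lambda)^{-1}$ on mean-zero functions). Applying Cauchy--Schwarz twice --- inside $E_{\omega,\lambda}$, then under $\mathbb{Q}_\lambda$ --- together with stationarity of $\mathbb{Q}_\lambda$,
\begin{align*}
|B(\lambda)|\;\le\;\sqrt{A(\lambda)}\;\sqrt{\exql\!\big[\operatorname{Var}_{\omega,\lambda}(\psi_\lambda(\bar\omega_1))\big]}\;\le\;\sqrt{A(\lambda)}\,\|\psi_\lambda\|_{L^2(\mathbb{Q}_\lambda)} ,
\end{align*}
so $v_1'(\lambda)=A(\lambda)+B(\lambda)\ge\sqrt{A(\lambda)}\big(\sqrt{A(\lambda)}-\|\psi_\lambda\|_{L^2(\mathbb{Q}_\lambda)}\big)$, and everything reduces to the inequality $\|\psi_\lambda\|_{L^2(\mathbb{Q}_\lambda)}<\sqrt{A(\lambda)}$ for all $\lambda\ge0$, once $\delta$ is small enough.

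To establish this I would split the range of $\lambda$ into three. On a fixed compact set bounded away from $0$ and $\infty$, $g(\lambda)$ is bounded and $A(\lambda)\ge c>0$, uniformly in $\delta\le\delta_0$, so $\|\psi_\lambda\|_{L^2(\mathbb{Q}_\lambda)}\le C\delta<\sqrt{A(\lambda)}$ for small $\delta$. For large $\lambda$ one may invoke Fact \ref{fct:mon} (using that $\lambda_c(\delta)$ can be chosen monotone in $\delta$, hence uniformly bounded for $\delta\le\delta_0$), or argue directly: there $\|\phi_\lambda-v_1\|_\infty=O(\delta e^{-\lambda})$ and $g(\lambda)\to1$, so $\|\psi_\lambda\|_{L^2(\mathbb{Q}_\lambda)}^2=O(\delta^2e^{-2\lambda})=o(e^{-\lambda})=o(A(\lambda))$. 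Near $\lambda=0$ the chain has no spectral gap, so there I would instead use the Einstein-relation output of the method of \cite{einstein}: $v_1$ extends to a $C^1$ function on $[0,\infty)$ with $v_1'(0)$ equal to the diffusivity of the unbiased conductance walk, which for $\delta$ small is a small perturbation of $1/d$ and in particular bounded below; combined with continuity of $v_1'$ this gives $v_1'>0$ on some $[0,\varepsilon]$, and the bounds on $A$ and on the (classically $O(\delta)$) corrector at $\lambda=0$ keep $\varepsilon$ uniform in $\delta\le\delta_0$. Since $v_1(0)=0<v_1(\lambda)$ for $\lambda>0$ by \cite{shen2002}, strict monotonicity on all of $[0,\infty)$ follows.

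The main obstacle is the uniform control of $B(\lambda)$, i.e.\ of the sensitivity of $\mathbb{Q}_\lambda$ to $\lambda$, simultaneously for all $\lambda$: one needs estimates on $\psi_\lambda$ (equivalently on the Green operator of the environment chain on mean-zero functions) that are uniform in $\delta\le\delta_0$ and that remain compatible both with the degeneration of the regeneration structure as $\lambda\downarrow0$ and with the decay $A(\lambda)\asymp e^{-\lambda}$ as $\lambda\uparrow\infty$. Turning the qualitative fact ``$B\equiv0$ when $\delta=0$'' into a quantitative bound therefore requires quantitative ergodicity/regeneration estimates for the environment-seen-from-the-particle chain holding uniformly in the disorder, and this --- drawing on the moment bounds for regeneration times of \cite{shen2002} and the machinery of \cite{einstein} --- is the technical heart of the proof.
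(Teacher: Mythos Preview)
Your high-level strategy---use the derivative formula of Theorem \ref{thm:derivative}, separate a ``main'' positive term from a disorder correction that vanishes as $\delta\to 0$, and handle small/intermediate/large $\lambda$ separately---coincides with the paper's. Your $A+B$ decomposition via the corrector $\psi_\lambda$ is a repackaging of the paper's comparison with the homogeneous-environment walk: the paper couples $X_n$ to the walk $\bar X_n$ in the constant environment $\bar\omega\equiv1$, writes $v_1'(\lambda)-\bar v_1'(\lambda)$ as a limit of normalized covariances of $M_n-\bar M_n$ and $N_n-\bar N_n$, and bounds these via Cauchy--Schwarz. Both routes produce the same ``naive'' estimate---your $\|\psi_\lambda\|_{L^2}\le C\delta\,g(\lambda)$ is the paper's $\mathbb E_\lambda[(\xi_{\tau_2}-\xi_{\tau_1})^2]/\mathbb E_\lambda[\tau_2-\tau_1]\le C\delta^2/\lambda^2$ in \eqref{easybound}---which handles any compact set of $\lambda$ bounded away from $0$, while Fact \ref{fct:mon} (with $\lambda_c(\delta)$ decreasing in $\delta$) disposes of large $\lambda$.

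The genuine gap is the small-$\lambda$ regime, and there your proposal does not yet contain an argument. You write that continuity of $v_1'$ gives $v_1'>0$ on some $[0,\varepsilon]$ and that the ``classically $O(\delta)$'' corrector at $\lambda=0$ keeps $\varepsilon$ uniform in $\delta\le\delta_0$. But the continuity statement produces only an $\varepsilon$ depending on the particular environment law; making it uniform requires equicontinuity of $\lambda\mapsto v_1'(\lambda)$ at $0$ across all laws with ellipticity $\le\delta_0$, which is precisely what is at stake and cannot be read off from continuity for each fixed law. And the corrector claim is not a deduction: knowing something about $\psi_0$ (which in $d=2$ is not even in $L^2(\mathbb Q_0)$) says nothing directly about $\psi_\lambda$ for small $\lambda>0$, where reversibility is lost and $g(\lambda)\sim\lambda^{-2}$ blows up. The paper runs exactly your contradiction set-up---take laws $P^{(n)}$ with $\delta_n\to0$ and $\lambda_n$ with $|v_1'(\lambda_n)-\bar v_1'(\lambda_n)|\ge\varepsilon_0$, conclude from \eqref{easybound} that necessarily $\lambda_n\to0$---but then closes it with a concrete estimate, Lemma \ref{lem:deltato0}: $\mathbb E^{(n)}_{\lambda_n}[\lambda_n^2(\xi_{\tau_2}-\xi_{\tau_1})^2]\to0$. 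The engine behind this is Lemma \ref{lem:kest}, a forward--backward martingale bound in the spirit of \cite{kesten} and \cite{klo2012fluctuations}, which exploits reversibility \emph{at} $\lambda=0$ to prove $\mathbb E_0\big[\sup_{n\le L}\|\sum_{k<n}d_{\omega,0}(X_k)\|^2\big]\le CL\delta$ with $C$ depending only on the dimension; this is then transferred to small positive $\lambda$ via the change-of-measure machinery of Section \ref{sec:diff}. That lemma is the quantitative input you correctly flag as ``the technical heart of the proof'', but your proposal stops short of supplying it.
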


On the other hand, outside the low disorder regime, there is in general no monotonicity, in particular, uniform ellipticity of the conductances does not imply monotonicity of the speed.

\begin{thm} \label{thm:nonmonotone}
Assume (A) and $d = 2$.
Define the environment law by 
\begin{align*}
P(\omega(0,e)=1)=p=1-P(\omega(0,e)=\kappa)
\end{align*}
for $p\in(0,1)$ and $\kappa>0$. Then, for $p$ close enough to 1 and $\kappa$ close enough to 0, there exist $\lambda_1<\lambda_2$ such that
\begin{align*}
v_1(\lambda_1)>v_1(\lambda_2) .
\end{align*}
\end{thm}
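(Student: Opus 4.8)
The plan is to exhibit the non-monotonicity by comparing the walk at a small bias $\lambda_1$ with the walk at a large bias $\lambda_2$, and for each regime estimate $v_1$ by a different mechanism. For $\lambda_2$ large, the relevant comparison is with the supercritical percolation picture described in the introduction: when $\kappa$ is very small, most $\kappa$-edges behave almost like closed edges, so the environment looks like a slight perturbation of a percolation cluster of $1$-edges. For $p$ close to $1$ this cluster is supercritical and contains "dead ends" or "traps" pointing in the $e_1$ direction, i.e. finite pockets whose only exit has a conductance ratio forcing the walk to climb a potential barrier of order $\lambda_2 \log(1/\kappa)$ — wait, more precisely, to backtrack out of a trap of depth $k$ built from $1$-edges against $\lambda_2$-bias the walk needs time exponential in $\lambda_2 k$, while entering costs only a $\kappa$-factor. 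The key point is that the expected trapping time, annealed over the environment, diverges (or at least becomes large) as $\lambda_2\to\infty$ and $\kappa\to0$ in a coordinated way, forcing $v_1(\lambda_2)$ to be small. On the other hand, for $\lambda_1$ small but fixed, one uses that $v_1(\lambda_1)>0$ by Shen's result (stated in the excerpt), and moreover one can get a \emph{quantitative} lower bound on $v_1(\lambda_1)$ that does not degenerate as $\kappa\to0$: indeed for fixed small $\lambda_1$ the uniform ellipticity with parameter determined by $\kappa$ gives a lower bound, but it degenerates, so instead I would argue that on the supercritical $1$-cluster alone the biased walk already has speed bounded below uniformly in $\kappa$, since adding the $\kappa$-edges (which are rarely traversed at small bias) only perturbs this slightly — this requires a stability/continuity argument for the speed in the $\kappa\to0$ limit at fixed $\lambda_1$.

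Concretely, the steps I would carry out are as follows. \textbf{Step 1:} Fix the target behavior at large bias. Show that there is a function $\lambda\mapsto \overline v(\lambda,p,\kappa)$ bounding $v_1(\lambda)$ from above such that for $p$ close to $1$ and $\kappa$ small, $\overline v(\lambda,p,\kappa)\to 0$ as $\lambda\to\infty$ along a suitable sequence, or at least $\limsup_{\lambda\to\infty}v_1(\lambda)$ can be made arbitrarily small. The mechanism: construct an explicit finite trap configuration $T$ — e.g. in $d=2$, a comb-like pocket of $1$-edges sticking out in the $-e_1$ direction with a single $\kappa$-gateway — occurring with positive $P$-probability; estimate via a Markov-chain hitting-time computation (the Dirichlet-form / conductance method, reversibility with respect to $\pi$) that the quenched expected exit time from $T$ started near its mouth is at least $c\, e^{c\lambda}/\kappa^{?}$; then use the standard regeneration-structure argument (as in \cite{BGP2003speed}, \cite{sznitman2003anisotropic}) to conclude $v_1(\lambda)\le C/\mathbb{E}_\lambda[\tau_{\text{reg}}]$ and that $\mathbb{E}_\lambda[\tau_{\text{reg}}]$ is large. \textbf{Step 2:} Lower bound at small bias uniformly in $\kappa$. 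Fix $\lambda_1$ small. On the cluster $\mathcal{C}$ of $1$-conductance edges (supercritical since $p>p_c(\mathbb{Z}^2)=1/2$), the biased walk restricted to $\mathcal{C}$ — or the full walk which leaves $\mathcal{C}$ only across a $\kappa$-edge, an event of small probability per step when $\lambda_1$ is small — has positive speed $v^{\mathcal{C}}_1(\lambda_1)>0$ by the percolation results for \emph{small} bias (below $\lambda_c$, where speed is positive). I would then show $v_1(\lambda_1)\ge v^{\mathcal{C}}_1(\lambda_1)/2>0$ for all $\kappa$ small, by a coupling or perturbation argument bounding the extra time spent on $\kappa$-excursions. \textbf{Step 3:} Combine: choose $\lambda_2$ large enough (depending on the desired gap) so that Step 1 gives $v_1(\lambda_2)<v^{\mathcal{C}}_1(\lambda_1)/2$, then choose $\kappa$ small and $p$ close to $1$ so both steps apply simultaneously; set $\lambda_1$ as in Step 2. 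Then $v_1(\lambda_1)>v_1(\lambda_2)$.

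The main obstacle I anticipate is \textbf{Step 2}: getting a lower bound on $v_1(\lambda_1)$ that is uniform in $\kappa$ as $\kappa\to0$. Uniform ellipticity with $\delta$ depending on $\kappa$ deteriorates, so Shen's bound alone is not enough — one genuinely needs that the walk at small bias does not get stuck by the $\kappa$-edges. The cleanest route is probably to note that at small $\lambda_1$ the walk crosses a $\kappa$-edge from $x$ with probability $O(\kappa)$, so excursions onto $\kappa$-edges are short and rare; alternatively, one can lower bound $v_1(\lambda_1)$ by a direct regeneration argument showing the annealed regeneration time stays bounded as $\kappa\to 0$. A secondary subtlety is making Step 1 quantitatively compatible with Step 2 — one must verify that the trap-based upper bound at large $\lambda_2$ really does beat the cluster-based lower bound at small $\lambda_1$, which is why it is essential that the former $\to0$ while the latter stays bounded away from $0$. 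One should also double-check that Theorem~\ref{thm:nonmonotone} only claims existence of \emph{some} $\lambda_1<\lambda_2$, not a statement about $\lambda_1\to0$; this gives freedom to take $\lambda_1$ moderately small rather than infinitesimal, simplifying Step 2. Finally, the explicit trap in Step 1 must be chosen so that its contribution survives the $p\to1$ limit (it has probability at least $(1-p)^{O(1)}\kappa^{O(1)}$ per unit volume, which is positive), and so that the required bias scaling $\lambda_2$ is finite; a single small pocket with one $\kappa$-mouth suffices and the hitting-time estimate is a routine gambler's-ruin / electrical-network computation.
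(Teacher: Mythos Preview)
Your overall architecture matches the paper's: a $\kappa$-independent lower bound $v_1(\lambda_1)\ge c_0$ at a small fixed $\lambda_1$, and an upper bound $v_1(\lambda_2)\le c_0/2$ at a large fixed $\lambda_2$ once $\kappa$ is sufficiently small. Two points, however, deserve correction or sharpening.

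First, the trap picture in Step~1 is inverted. The effective traps are not pockets entered through a $\kappa$-gateway; they are \emph{dead ends of the $1$-cluster}: finite pieces of the infinite open cluster extending in the $+e_1$ direction whose rightward boundary consists only of $\kappa$-edges. The walker enters such a dead end freely through $1$-edges (no $\kappa$ entry cost), is driven to the far end by the bias, and then must either backtrack against the drift or leak out across a $\kappa$-edge. The paper does not compute exit times from an explicit configuration; instead it couples the walk with the $\kappa=0$ percolation walk, observes that the two processes agree up to a geometric time of mean $\sim 1/\kappa$, and invokes the known fact from \cite{BGP2003speed} that the percolation dead-end exit time $\bar T_A$ has infinite expectation once $\lambda_2$ exceeds the percolation critical bias. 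This gives $\mathbb{E}_{\lambda_2}[T_A]\to\infty$ as $\kappa\to 0$ with almost no computation, and a ladder-time decomposition then yields $v_1(\lambda_2)\le C/\mathbb{E}_{\lambda_2}[T_A]$.

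Second---and this is where the substance lies---your Step~2 is much thinner than what is required, and the paper does \emph{not} proceed by comparing with the speed $v_1^{\mathcal C}(\lambda_1)$ on the infinite $1$-cluster. The difficulty with your coupling is that the walk need not start on $\mathcal C$, and after crossing a $\kappa$-edge it may land in a finite $1$-component of arbitrary depth; bounding the return time to $\mathcal C$ uniformly in $\kappa$ is itself a nontrivial problem of the same flavor as the one you are trying to solve. The paper instead bounds the regeneration gap directly: it transplants the ``good point'' machinery of \cite{BGP2003speed}---conductance estimates for backtracking from good points (Lemma~\ref{lem:preconduc}), a Carne--Varopoulos bound modified to handle vertices surrounded by $\kappa$-edges, a displacement estimate (Lemma~\ref{lem:displacement}), and a never-return lemma (Lemma~\ref{lem:neverreturn})---to show $\mathbb{E}_{\lambda_1}[R_2-R_1]\le C(\lambda_1,p)$ with a constant independent of $\kappa$. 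Since $(X_{R_2}-X_{R_1})\cdot e_1\ge 1$, the speed formula \eqref{speedperc} gives the uniform lower bound. Your alternative route via continuity of the speed at $\kappa=0$ is plausible in spirit, but it is a substantive statement that would require a proof of comparable technical weight; it is not a shortcut.
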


To prove Theorem \ref{thm:monotone}, we show that the derivative of the speed is strictly positive, where the derivative can be expressed as the covariance of two processes. For this, we define
\begin{align}
M_n & = X_n - \sum_{k=0}^{n-1} E_{\omega,\lambda}^{X_k}[X_1-X_0] ,\label{processM} \\
N_n & = X_n - nv(\lambda) \label{processN}.
\end{align} 
We show in Proposition \ref{prop:clt} below that under $\mathbb{P}_\lambda$, the $2d$-dimensional process $\frac{1}{\sqrt{n}} (M_n,N_n)$ converges in distribution to a Gaussian limit $(M,N)$.

\begin{thm}\label{thm:derivative}
Assume (A). For any $\lambda>0$, $v$ is differentiable at $\lambda$ with
\begin{align*}
v'(\lambda) = \operatorname{Cov}_{\lambda}(M,N) e_1 .
\end{align*}
\end{thm}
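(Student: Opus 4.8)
The plan is to follow the strategy of the Einstein-relation proof in \cite{einstein}: realize $v'(\lambda)$ as the limit of difference quotients $\frac{1}{h}(v(\lambda+h)-v(\lambda))$, and express each difference quotient via a change of measure between the walks with bias $\lambda$ and with bias $\lambda+h$. Concretely, for fixed $\lambda$ and small $h$, the quenched law $P_{\omega,\lambda+h}$ is absolutely continuous with respect to $P_{\omega,\lambda}$ on the $\sigma$-field generated by $X_0,\dots,X_n$, and the Radon--Nikodym derivative factorizes along the path as a product of the local ratios
\begin{align*}
\frac{P_{\omega,\lambda+h}(X_{k+1}=y\mid X_k=x)}{P_{\omega,\lambda}(X_{k+1}=y\mid X_k=x)}
= \frac{e^{h(y-x)\cdot e_1}\, Z_{\omega,\lambda}(x)}{Z_{\omega,\lambda+h}(x)},
\end{align*}
where $Z_{\omega,\mu}(x)=\sum_{z\sim x}\omega(x,z)e^{\mu(z-x)\cdot e_1}$ is the normalizing constant. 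Taking logarithms, the log-likelihood ratio up to time $n$ is $h\,X_n\cdot e_1 - \sum_{k=0}^{n-1}[\log Z_{\omega,\lambda+h}(X_k)-\log Z_{\omega,\lambda}(X_k)]$. Expanding in $h$ and using $\partial_\mu \log Z_{\omega,\mu}(x)=E_{\omega,\mu}^x[(X_1-X_0)\cdot e_1]$ at $\mu=\lambda$, the first-order term in $h$ is exactly $h$ times $X_n\cdot e_1-\sum_{k=0}^{n-1}E_{\omega,\lambda}^{X_k}[(X_1-X_0)\cdot e_1] = h\,M_n\cdot e_1$, with $M_n$ as in \eqref{processM}.

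From here the argument runs as follows. Write $v_1(\lambda+h)=\lim_n \frac1n \mathbb{E}_{\lambda+h}[X_n\cdot e_1]$, and use the change of measure to rewrite this annealed expectation under $\mathbb{P}_\lambda$ with the Radon--Nikodym weight $W_n^{(h)}=\exp(h M_n\cdot e_1 + O(h^2)\,n)$ inserted; here one must be careful that the $O(h^2)$ remainder is uniform, which follows from uniform ellipticity since $\log Z_{\omega,\mu}$ is smooth in $\mu$ with derivatives bounded uniformly in $\omega$ and $x$. Then
\begin{align*}
\frac{v_1(\lambda+h)-v_1(\lambda)}{h}
= \lim_n \frac{1}{hn}\,\mathbb{E}_\lambda\!\big[(X_n\cdot e_1-nv_1(\lambda))(W_n^{(h)}-1)\big],
\end{align*}
using $\mathbb{E}_\lambda[W_n^{(h)}]=1$ to subtract the centering $nv_1(\lambda)$, i.e. replacing $X_n\cdot e_1$ by $N_n\cdot e_1$. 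Since $W_n^{(h)}-1 \approx h M_n\cdot e_1$, the right-hand side should converge to $\lim_n \frac1n \mathbb{E}_\lambda[(N_n\cdot e_1)(M_n\cdot e_1)]$, which by the joint central limit theorem of Proposition \ref{prop:clt} (together with uniform integrability of $\frac1n (M_n\cdot e_1)(N_n\cdot e_1)$) equals $\operatorname{Cov}_\lambda(M,N)e_1\cdot e_1$, and the same computation in each coordinate gives the vector identity $v'(\lambda)=\operatorname{Cov}_\lambda(M,N)e_1$. One also needs to check the difference quotient from the left, $h<0$, to get genuine differentiability rather than one-sided derivatives; the expansion is symmetric in $h$ so this is the same computation.

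The main obstacle is the interchange of limits: one is taking $n\to\infty$ (to get velocities and the CLT) and $h\to 0$ (to get the derivative), and these must be controlled simultaneously. The dangerous term is the exponential weight $W_n^{(h)}$, whose exponent contains $M_n\cdot e_1$ of size $\sqrt n$ and a remainder of size $h^2 n$; one needs moment bounds of the form $\mathbb{E}_\lambda[\exp(h M_n\cdot e_1)] \le e^{Cn h^2}$ uniformly for small $h$, plus uniform integrability of the relevant products, to justify that $\frac{1}{hn}\mathbb{E}_\lambda[N_n\cdot e_1\,(W_n^{(h)}-1)] \to \frac1n\mathbb{E}_\lambda[N_n\cdot e_1\,M_n\cdot e_1]$ uniformly in $n$ and then that this converges to the covariance. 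Establishing these concentration/integrability estimates is exactly where uniform ellipticity is used, and is the technical heart of the proof; this is precisely the part that is carried out in \cite{einstein} for the Einstein relation and which we adapt here.
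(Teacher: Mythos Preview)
Your proposal correctly identifies the change-of-measure computation and the appearance of $M_n\cdot e_1$ as the first-order term in the log-likelihood expansion. However, the way you propose to handle the interchange of limits is not what the paper does, and your description of that step (``show $\frac{1}{hn}\mathbb{E}_\lambda[N_n\cdot e_1\,(W_n^{(h)}-1)] \to \frac1n\mathbb{E}_\lambda[N_n\cdot e_1\,M_n\cdot e_1]$ uniformly in $n$'') is not quite workable as stated: for fixed $h$ the exponent $hM_n\cdot e_1$ is of order $h\sqrt{n}$, so the linearization $W_n^{(h)}-1\approx hM_n\cdot e_1$ breaks down once $n\gg h^{-2}$, and there is no uniform-in-$n$ statement to be had.

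The paper's resolution is the Lebowitz--Rost diagonal trick: it couples the time scale to the perturbation by setting $t_\lambda=\alpha(\lambda-\lambda_0)^{-2}$ for a fixed auxiliary parameter $\alpha>1$, and splits the difference quotient into two pieces. Theorem~\ref{thm:diff1} uses exactly your Girsanov expansion, but at this specific time scale, so that $\bar\lambda M_{t_\lambda}\cdot e_1$ stays of bounded order and the $L^p$-bound \eqref{mompbound} yields uniform integrability; the limit is then identified with the covariance via Proposition~\ref{prop:clt} and Girsanov's formula for the Gaussian limit. Theorem~\ref{thm:diff2} is the second, independent ingredient you do not mention: using the regeneration structure of Lemma~\ref{lem:regen} to show $|\tfrac{1}{t_\lambda}\mathbb{E}_\lambda[X_{t_\lambda}]-v(\lambda)|\le C\bar\lambda/\sqrt{\alpha}$, i.e.\ that the finite-time mean at the coupled scale is close to the true speed. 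Combining the two and sending $\alpha\to\infty$ gives the derivative. So the missing idea in your proposal is not a sharper moment bound but the choice of the diagonal scale $t\sim h^{-2}$ together with the separate regeneration-based estimate replacing $\tfrac{1}{t}\mathbb{E}_\lambda[X_t]$ by $v(\lambda)$.
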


\begin{rmk}
The statement in Theorem \ref{thm:derivative} is true for $\lambda =0$ as well - this is the Einstein relation proved in \cite{einstein}. In particular, $\lambda \to v_1(\lambda)$ is a continuous function. The continuity of $v_1$ may seem obvious, but to our best knowledge, it has not been proved for a biased random walk on a percolation cluster, and not even for biased random walk on Galton-Watson trees.
\end{rmk}

\section{A general coupling}
\label{sec:coupling}

After a suitable enlargement of our probability space, let $U_0,U_1,\dots$ be a sequence of independent random variables with a uniform distribution on $[0,1]$, independent of $\omega$. Let us denote the joint law of the $U_k$ and $\omega$ by $\mathbb{P}$, with expectation $\mathbb{E}$. We will construct a coupling of quenched laws for different environments and different values of the bias, letting $U_k$ determine the movement at time $k$. Given an environment $\omega$ and $\lambda\geq 0$, define 
\begin{align*}
p_{\omega,\lambda} (x,e) = P_{\omega,\lambda}(X_1=x+e| X_0=x) 
\end{align*}
and, with $e_{k}=-e_{k-d}$ for $d+1 \leq k \leq 2d$,
let $q_{\omega,\lambda}(x,0)=0$ and for $1\leq k\leq 2d$,
\begin{align*}
q_{\omega,\lambda} (x,k) = \sum_{j=1}^{k} p_{\omega,\lambda} (x,e_j) .
\end{align*}
Now, given two environments $\omega_1$ and $\omega_2$ and biases $\lambda_1$ and $\lambda_2$ we can define processes $X_n^{(\omega_1,\lambda_1)}$ and $X_n^{(\omega_2,\lambda_2)}$ by setting 
\begin{align*}
X_{n+1}^{(\omega_i,\lambda_i)}-X_n^{(\omega_i,\lambda_i)} = e_k 
\quad \text{ iff } \quad 
q_{\omega_i,\lambda_i} (X_n,k-1) < U_n \leq q_{\omega_i,\lambda_i} (X_n,k) 
\end{align*}
for $i=1,2$. Then the marginal of $(X_n^{(\omega_i,\lambda_i)})_n$ is the original quenched law $P_{\omega_i,\lambda_i}$. In the one-dimensional case this coupling also shows the monotonicity of the speed for any ellipticity constant, since then $\lambda_1\leq \lambda_2$ implies $X_n^{(\omega,\lambda_1)}\leq X_n^{(\omega,\lambda_2)} $.  To give a short justification of Fact \ref{fct:mon}, we additionally introduce for $\lambda_s>0$ the one-dimensional process 
\begin{align*}
Y_n = \sum_{k=0}^{n-1} \left( 2\cdot \mathbbm{1}\{U_k \leq \tfrac{e^{\lambda_s}}{e^{\lambda_s}+(2d-1)\beta} \} -1\right),
\end{align*}
where $\beta = \frac{1+\delta}{1-\delta}$. 
Assume $\lambda_s > \log \beta + \log (2d-1)$, then $Y_n$ is a simple random walk with drift to the right.
From the lower bound \eqref{lowerbounde1}, we see that if $Y_n$ moves to the right and $\lambda_s<\lambda_i$, then $X_n^{(\omega_i,\lambda_i)}$ moves to the right. This allows us to consider so-called super-regeneration times $\tau_k, k\geq 1$ (introduced by \cite{BFGH}) where $\tau_1$ is the infimum over all times $n\geq 1$ with
\begin{align*}
\max_{k<n-1} Y_k < Y_{n-1}<Y_n< \min_{k>n}Y_k ,
\end{align*} 
and inductively $\tau_{n+1} = \tau_1 \circ \theta_{\tau_n}+ \tau_n$ (here $\theta_k$ denotes the time shift, i.e. 
$\theta_k(Y_n)_{n\geq 0} = (Y_{k+n})_{n\geq 0}$). Since the increments of $Y_n$ are a lower bound for the increments of $X_n^{(\omega_i,\lambda_i)}$ in direction $e_1$, $\tau_1$ is a regeneration time for the process $X_n = X_n^{(\omega_i,\lambda_i)}$, provided $\lambda_i >\lambda_s$. 
More precisely,
\begin{align*}
\max_{k<\tau_n-1} X_k\cdot e_1 < X_{\tau_n-1}\cdot e_1 < X_{\tau_n}\cdot e_1 < \min_{k>\tau_n}X_k\cdot e_1 ,
\end{align*} 
Unlike in \cite{BFS2014}, we require an additional step to the right in order to decouple the environment seen by the random walker. By classical arguments, the sequence $(X_{\tau_k}^{(\omega,\lambda)} - X_{\tau_{k-1}}^{(\omega,\lambda)},\tau_k-\tau_{k-1})_{k\geq 2}$ is an iid sequence under $\mathbb{P}$, and the marginal is equal to the distribution of $(X_{\tau_1},\tau_1)$, conditioned on the event $R=\{Y_n>0 \text{ for all } n\geq 1\}$. 
Moreover, 
\begin{align*}
v(\lambda) = \frac{\mathbb{E}[X_{\tau_1}^{(\omega,\lambda)}| R]}{\mathbb{E}[\tau_1|R]}
\end{align*}
for any $\lambda>\lambda_s$. Fact \ref{fct:mon} follows then if we can show for $\lambda_s$ large enough and $\lambda > \lambda_s$,
\begin{align} \label{superreg1}
\mathbb{E}\left[ (X_{\tau_1}^{(\omega,\lambda+\varepsilon)}-X_{\tau_1}^{(\omega,\lambda)})\cdot e_1 | R\right] >0 ,
\end{align}
for any $\varepsilon>0$. Following the arguments of \cite{BFS2014}, this is implied by the following observations:
\begin{itemize}
\item When $Y_n$ moves to the right, both $X_n^{(\omega,\lambda)}$ and $X_n^{(\omega,\lambda+\varepsilon)}$ move to the right.
\item When $Y_n$ moves to the left for the first time, then
\begin{align*} 
(X_n^{(\omega,\lambda+\varepsilon)}-X_n^{(\omega,\lambda)})\cdot e_1\geq 0
\end{align*} 
and, given that $Y_n$ moves to the left for the first time at time $n$, with probability larger than some $p_0>0$, 
\begin{align*} 
(X_n^{(\omega,\lambda+\varepsilon)}-X_n^{(\omega,\lambda)})\cdot e_1> 0 .
\end{align*} 
\item When until time $\tau_1$ the process $Y_n$ took $k$ steps to the left, the increments of $X_n^{(\omega,\lambda)}$ and $X_n^{(\omega,\lambda+\varepsilon)}$ could differ at most $k$ times. 
\item When until time $n$ the increments of $X_n^{(\omega,\lambda)}$ and $X_n^{(\omega,\lambda+\varepsilon)}$ were different exactly $k$ times, then
\begin{align*} 
(X_{\tau_1}^{(\omega,\lambda+\varepsilon)}-X_{\tau_1}^{(\omega,\lambda)})\cdot e_1> -2(k-1)
\end{align*}
\item Let $D_k$ be the event that until time $\tau_1$, $Y_n$ did $k$ steps to the left and for some $n\leq \tau_1$, $X_n^{(\omega,\lambda+\varepsilon)}-X_n^{(\omega,\lambda)}\neq 0$. Then 
\begin{align}\label{BFSproof2}
\mathbb{E}\left[ (X_{\tau_1}^{(\omega,\lambda+\varepsilon)}-X_{\tau_1}^{(\omega,\lambda)})\cdot e_1 | R\right]
\geq p_0 \mathbb{P}(D_1|R) - \sum_{k\geq 2} 2(k-1) \mathbb{P}(D_k|R) .
\end{align}
For $\lambda_s$ large enough, the right hand side of \eqref{BFSproof2} is positive, which follows analogously to the proof in \cite{BFS2014} of positivity of display (4.1) therein.  
\end{itemize}


\section{Differentiating the speed} \label{sec:diff}

Theorem \ref{thm:derivative} is a consequence of the two following results.
For simplicity, we will omit integer parts.
\medskip

\begin{thm}\label{thm:diff1} Let $\lambda_0>0$, $\alpha > 1$ and $t_\lambda=\alpha\cdot (\lambda-\lambda_0)^{-2}$, then
\begin{align*}
\lim_{\lambda \to \lambda_0} \frac{\tfrac{1}{t_\lambda}\mathbb{E}_\lambda[X_{t_\lambda}] - v(\lambda_0)}{\lambda-\lambda_0} = \operatorname{Cov}_{\lambda_0}(M,N)\cdot e_1 .
\end{align*}
\end{thm}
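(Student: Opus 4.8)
The plan is to prove Theorem \ref{thm:diff1} by a Taylor expansion of the quenched transition probabilities in the bias parameter, combined with the functional CLT for $(M_n, N_n)$ from Proposition \ref{prop:clt}. First I would change measure: for a fixed reference bias $\lambda_0$ and a nearby bias $\lambda$, express $\mathbb{E}_\lambda[X_{t_\lambda}]$ in terms of expectations under $\mathbb{P}_{\lambda_0}$ by introducing the Radon--Nikodym derivative $\frac{dP_{\omega,\lambda}}{dP_{\omega,\lambda_0}}$ along a path of length $t_\lambda$. Because the one-step transition probabilities depend smoothly on $\lambda$, writing $\lambda = \lambda_0 + \varepsilon$ with $\varepsilon = \lambda - \lambda_0$, the log-likelihood ratio of the first $n$ steps expands as
\begin{align*}
\log \frac{dP_{\omega,\lambda}}{dP_{\omega,\lambda_0}}\Big|_{\mathcal{F}_n} = \varepsilon \sum_{k=0}^{n-1} \big( (X_{k+1}-X_k)\cdot e_1 - E_{\omega,\lambda_0}^{X_k}[(X_1-X_0)\cdot e_1] \big) + O(\varepsilon^2 n) ,
\end{align*}
and the leading linear term is precisely $\varepsilon$ times (the $e_1$-component of) the martingale $M_n$ from \eqref{processM}. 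Then I would write
\begin{align*}
\mathbb{E}_\lambda[X_{t_\lambda}] = \mathbb{E}_{\lambda_0}\Big[ X_{t_\lambda} \exp\big( \varepsilon M_{t_\lambda}\cdot e_1 + O(\varepsilon^2 t_\lambda)\big) \Big] ,
\end{align*}
and, since $\varepsilon^2 t_\lambda = \alpha$ is constant by the choice $t_\lambda = \alpha (\lambda-\lambda_0)^{-2}$, the error term stays bounded and one can hope to control it uniformly.

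The heart of the argument is then a second-order analysis. Writing $X_{t_\lambda} = v(\lambda_0) t_\lambda + N_{t_\lambda}$ and expanding the exponential, the first-order term in $\varepsilon$ of $\frac{1}{t_\lambda}\mathbb{E}_\lambda[X_{t_\lambda}] - v(\lambda_0)$ should be
\begin{align*}
\frac{\varepsilon}{t_\lambda}\mathbb{E}_{\lambda_0}\big[ N_{t_\lambda} \,(M_{t_\lambda}\cdot e_1) \big] + (\text{lower order}) .
\end{align*}
Dividing by $\varepsilon$ and letting $\lambda \to \lambda_0$ (so $t_\lambda \to \infty$), the quantity $\frac{1}{t_\lambda}\mathbb{E}_{\lambda_0}[N_{t_\lambda}(M_{t_\lambda}\cdot e_1)]$ converges to $\operatorname{Cov}_{\lambda_0}(N, M)\cdot e_1 = \operatorname{Cov}_{\lambda_0}(M,N)\cdot e_1$ by the CLT of Proposition \ref{prop:clt}, provided one has the requisite uniform integrability to pass from convergence in distribution of $\frac{1}{n}(M_n, N_n) \cdot (\text{bilinear form})$ to convergence of the rescaled covariance. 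The terms that must be shown negligible are: (i) the contribution of the mean part $v(\lambda_0) t_\lambda$ times the higher-order pieces of the exponential — here one uses that $M_{t_\lambda}$ is a mean-zero $\mathbb{P}_{\lambda_0}$-martingale so $\mathbb{E}_{\lambda_0}[M_{t_\lambda}] = 0$ and the $\varepsilon^2$ correction against the constant drift contributes at order $\varepsilon^2 t_\lambda / t_\lambda = \varepsilon^2 \to 0$ after dividing by $\varepsilon$; and (ii) the quadratic-and-higher terms $\mathbb{E}_{\lambda_0}[N_{t_\lambda}(\varepsilon M_{t_\lambda}\cdot e_1)^2]$, which scale like $t_\lambda^{1/2} \cdot \varepsilon^2 t_\lambda = \varepsilon^2 t_\lambda^{3/2} = \alpha t_\lambda^{1/2} \to \infty$ naively — this is the delicate point and forces a more careful treatment.

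This last point is what I expect to be the main obstacle: a crude bound on the quadratic term diverges, so the normalization $t_\lambda \sim (\lambda-\lambda_0)^{-2}$ is genuinely sharp and the proof cannot afford lossy estimates. The resolution should come from the fact that $N_{t_\lambda}$ is itself centered and only weakly correlated with powers of $M_{t_\lambda}$ beyond the leading bilinear pairing, together with a precise second-order expansion of $\log \frac{dP_{\omega,\lambda}}{dP_{\omega,\lambda_0}}$ whose quadratic term is a compensator that cancels the ``energy'' $\frac12 \varepsilon^2 \langle M \rangle_{t_\lambda}$; exploiting the exact form $e^{\text{martingale} - \frac12 \text{bracket}}$ of the likelihood ratio keeps the relevant exponential a genuine (mean-one) martingale and prevents the spurious blowup. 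Concretely I would follow the strategy of \cite{einstein}: introduce the tilted measure $\mathbb{Q}_\lambda$ under which the likelihood ratio is absorbed, show the environment-viewed-from-the-particle process has good mixing (regeneration structure from Section \ref{sec:coupling}), and establish the needed uniform integrability of $\frac{1}{\sqrt{t_\lambda}} N_{t_\lambda}$ and $\frac{1}{\sqrt{t_\lambda}} M_{t_\lambda}$ via uniform $L^p$ bounds, $p > 2$, coming from the exponential moment bounds on regeneration times. Once uniform integrability is in hand, the interchange of limit and expectation is routine and the covariance identity follows; Theorem \ref{thm:derivative} then results from Theorem \ref{thm:diff1} by a standard argument showing the difference quotient has the same one-sided limits from both directions, which is stated separately in the excerpt.
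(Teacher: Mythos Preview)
Your proposal lands on the paper's strategy but arrives there circuitously and misattributes the key uniform-integrability estimate.

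The paper never Taylor-expands the exponential density. It writes
\[
\frac{\tfrac{1}{t}\mathbb{E}_\lambda[X_t]-v(\lambda_0)}{\bar\lambda}
= \mathbb{E}_{\lambda_0}\!\left[\tfrac{\bar\lambda}{\alpha}\,N_t\cdot G_{\omega,\lambda_0}(\bar\lambda,t)\right],
\]
keeps $G=\frac{dP_{\omega,\lambda}}{dP_{\omega,\lambda_0}}$ intact, and shows directly that $G$ converges in law under $\mathbb{P}_{\lambda_0}$ to $\exp\big(\widehat M_\alpha\!\cdot e_1 - \tfrac12\mathbb{E}_{\lambda_0}[(\widehat M_\alpha\!\cdot e_1)^2]\big)$. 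Joint weak convergence with $\tfrac{\bar\lambda}{\alpha}N_t\Rightarrow\tfrac{1}{\alpha}\widehat N_\alpha$ (Proposition~\ref{prop:clt}) plus uniform integrability gives convergence of the expectation, and the limit $\tfrac{1}{\alpha}\mathbb{E}_{\lambda_0}\big[\widehat N_\alpha\,e^{\widehat M_\alpha\cdot e_1-\frac12\mathbb{E}[(\widehat M_\alpha\cdot e_1)^2]}\big]$ is identified as $\operatorname{Cov}_{\lambda_0}(M,N)e_1$ by Girsanov. So your worry about divergent quadratic terms from expanding $e^{\varepsilon M_t\cdot e_1}$ is a detour: the correct route, which you eventually name, is to leave the exponential untouched.

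The point where your proposal is actually off is the source of the $L^p$ bound on $G$. You attribute it to regeneration-time moments, but the paper obtains it by a purely algebraic trick having nothing to do with regenerations: one checks that $p\log G(\bar\lambda,t)=\log G(p\bar\lambda,t)+R$ with $R\le p^2\alpha+1$ for $|\bar\lambda|$ small, whence
\[
\mathbb{E}_{\lambda_0}\big[G(\bar\lambda,t)^p\big]\le e^{p^2\alpha+1}\,\mathbb{E}_{\lambda_0}\big[G(p\bar\lambda,t)\big]=e^{p^2\alpha+1}.
\]
The regeneration moment bounds (Lemma~\ref{lem:mombound}) are used only for the factor $\bar\lambda N_t$. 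There is also no need to introduce a tilted measure $\mathbb{Q}_\lambda$ or the environment process here; the argument is a self-contained change of measure plus CLT.

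Finally, your last sentence mischaracterizes how Theorem~\ref{thm:derivative} follows. Theorem~\ref{thm:diff1} computes the limit of $\bar\lambda^{-1}\big(\tfrac{1}{t_\lambda}\mathbb{E}_\lambda[X_{t_\lambda}]-v(\lambda_0)\big)$, not of $\bar\lambda^{-1}(v(\lambda)-v(\lambda_0))$. Bridging the two genuinely requires Theorem~\ref{thm:diff2}, which bounds $\bar\lambda^{-1}\big|\tfrac{1}{t_\lambda}\mathbb{E}_\lambda[X_{t_\lambda}]-v(\lambda)\big|$ by $C/\sqrt{\alpha}$; one then lets $\alpha\to\infty$. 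This is not merely ``the difference quotient has the same one-sided limits from both directions.''
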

\medskip

\begin{thm} \label{thm:diff2} Let $t_\lambda$ be as in Theorem \ref{thm:diff1}. There exists a $C>0$, such that for any $\alpha>1$, 
\begin{align*}
\limsup_{\lambda\to \lambda_0} \left| \frac{\tfrac{1}{t_\lambda}\mathbb{E}_\lambda[X_{t_\lambda}] - v(\lambda)}{\lambda-\lambda_0} \right| \leq \frac{C}{\sqrt{\alpha}} .
\end{align*}
\end{thm}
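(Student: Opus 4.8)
The plan is to control the difference between the averaged displacement $\tfrac{1}{t_\lambda}\mathbb{E}_\lambda[X_{t_\lambda}]$ at the perturbed bias $\lambda$ and the true velocity $v(\lambda)$ at that same bias, uniformly in the time scale parameter $\alpha$. The natural tool is the regeneration structure for the biased walk at bias $\lambda$: let $(\tau_k)_{k\ge1}$ be the regeneration times, so that $(X_{\tau_{k+1}}-X_{\tau_k},\tau_{k+1}-\tau_k)$ are i.i.d.\ (after the first block) under $\mathbb{P}_\lambda$, with $v(\lambda)=\mathbb{E}_\lambda[X_{\tau_2}-X_{\tau_1}]/\mathbb{E}_\lambda[\tau_2-\tau_1]$. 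Writing $k(n)$ for the number of regenerations up to time $n$, the gap between $X_n/n$ and $v(\lambda)$ at a deterministic time is governed by the fluctuations of $X_n - k(n)\mathbb{E}_\lambda[\Delta X]$ and of $n - k(n)\mathbb{E}_\lambda[\Delta\tau]$, i.e.\ by the $L^2$ fluctuations of a renewal-reward process stopped at a deterministic time. Since uniform ellipticity gives exponential (or at least high-order polynomial) moments for the regeneration increments — this is standard, cf.\ \cite{shen2002} and the annealed-law setup already used for Proposition \ref{prop:clt} — one gets a bound of the form
\begin{align*}
\left| \tfrac{1}{n}\mathbb{E}_\lambda[X_n] - v(\lambda) \right| \leq \frac{C'}{\sqrt{n}}
\end{align*}
with a constant $C'$ that can be chosen uniformly for $\lambda$ in a neighborhood of $\lambda_0$ (the moment bounds being uniform on compact $\lambda$-intervals bounded away from $0$). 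Evaluating at $n=t_\lambda = \alpha(\lambda-\lambda_0)^{-2}$ then gives
\begin{align*}
\left| \frac{\tfrac{1}{t_\lambda}\mathbb{E}_\lambda[X_{t_\lambda}] - v(\lambda)}{\lambda-\lambda_0} \right|
\leq \frac{C'}{|\lambda-\lambda_0|\sqrt{t_\lambda}} = \frac{C'}{\sqrt{\alpha}} ,
\end{align*}
which is exactly the claimed bound with $C=C'$, and the $\limsup$ as $\lambda\to\lambda_0$ is harmless since the estimate already holds for every $\lambda$ close to $\lambda_0$.

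More concretely, the key steps in order are: (i) recall the regeneration construction at a fixed bias $\lambda$ and the i.i.d.\ block decomposition, noting that the tail bounds on $(\tau_{k+1}-\tau_k, |X_{\tau_{k+1}}-X_{\tau_k}|)$ are uniform for $\lambda$ near $\lambda_0$; (ii) write $X_n\cdot e_1 - n v_1(\lambda)$ as a sum over completed regeneration blocks of centered i.i.d.\ terms, plus boundary terms from the first (non-stationary) block and from the incomplete last block, and control the overshoot $\tau_{k(n)+1}-n$; (iii) apply a second-moment / renewal-CLT estimate to conclude $\mathbb{E}_\lambda[|X_n\cdot e_1 - n v_1(\lambda)|] \le C' \sqrt{n}$ with uniform $C'$; (iv) substitute $n=t_\lambda$ and divide by $|\lambda-\lambda_0|$.

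The main obstacle is making the constant $C'$ genuinely \emph{uniform} in $\lambda$ near $\lambda_0$: one needs the regeneration-time tail estimates (and the positivity of $\mathbb{E}_\lambda[\tau_2-\tau_1]$, $\mathbb{E}_\lambda[X_{\tau_2}-X_{\tau_1}]\cdot e_1$) to hold with constants that do not degenerate as $\lambda$ varies over a small interval around $\lambda_0>0$. This follows because the uniform ellipticity constant $\delta$ is fixed and the relevant super-regeneration construction (as in Section \ref{sec:coupling}, or the one in \cite{shen2002}) depends on $\lambda$ only through quantities that are continuous and bounded away from their degenerate values on $[\lambda_0/2, 2\lambda_0]$; one simply checks that each estimate in the renewal-reward bound can be taken with a single constant valid on that interval. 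A secondary, routine point is handling the $d$ coordinates of $X_n$ jointly rather than just $e_1$, but since $|X_n-nv(\lambda)|$ is dominated by the same block sums this adds nothing essential.
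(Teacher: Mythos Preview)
Your proposal is correct and follows essentially the same route as the paper: use the regeneration structure at bias $\lambda$ to prove a renewal-reward estimate of the form $\bigl|\tfrac{1}{n}\mathbb{E}_\lambda[X_n]-v(\lambda)\bigr|\le C'/\sqrt{n}$ with $C'$ uniform for $\lambda$ near $\lambda_0$, then substitute $n=t_\lambda=\alpha/(\lambda-\lambda_0)^2$. The paper's detailed implementation differs only cosmetically: its regeneration increments are $1$-dependent rather than i.i.d., and instead of a direct renewal-reward bound it passes through the deterministic times $\gamma_n=\mathbb{E}_\lambda[\tau_n]$, comparing successively $\tfrac{1}{t}\mathbb{E}_\lambda[X_t]$, $\tfrac{1}{\gamma_n}\mathbb{E}_\lambda[X_{\gamma_n}]$, $\tfrac{1}{\gamma_n}\mathbb{E}_\lambda[X_{\tau_n}]$, and $v(\lambda)$, with the $C/\sqrt{n}$ loss coming from $\mathbb{E}_\lambda[(\tau_n-\gamma_n)^2]^{1/2}\le C\sqrt{n}$; this is exactly the second-moment renewal estimate you invoke.
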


\medskip

\subsection{Regeneration times} \label{sec:regeneration}

The proof of Theorem \ref{thm:diff1} and Theorem \ref{thm:diff2} relies on a regeration structure for the process $(X_n)_n$, which decomposes the trajectory into 1-dependent increments with good moment bounds. For $h\in \mathbb{R}$, we let 
\begin{align*}
\mathcal{H}_h = \{x \in \mathbb{Z}^d |\, x \cdot e_1 = \lfloor h \rfloor \}
\end{align*}
denote the hyperplane with first coordinate $\lfloor h \rfloor$ and 
\begin{align*}
T_h = \inf \{n\geq 0|\, X_n \in \mathcal{H}_h \}
\end{align*}
be the first hitting time of $\mathcal{H}_h$.
The regeneration times $\tau_k$, $k\geq 1$ are then hitting times $T_{mL/\lambda}$, after which the random walk never visits $\mathcal{H}_{(m-1)L/\lambda}$ again and the displacement $X_{T_{mL/\lambda}}-X_{T_{(m-1)L/\lambda}}$ can be decoupled from the environment in $\{x \in \mathbb{Z}^d |\, x \cdot e_1 \leq \lfloor h\rfloor \}$. The detailed construction of the sequence $(\tau_k)_k$ can be found in \cite{einstein}, for the sake of brevity we only summarize here the consequences in the following lemma. We remark that the moment bounds are stated in \cite{einstein} only for $\lambda\in (0,\lambda_u)$ for some small $\lambda_u>0$, but the proof works actually for any bounded, positive $\lambda$. 
\begin{rmk}
Note that the $(\tau_k)_k$ are not the same as the super-regeneration times in Section \ref{sec:coupling} (which were also denoted by $(\tau_k)_k$)  but in order to be consistent with \cite{BFGH} and \cite{einstein}, we keep this notation. 
\end{rmk}

\begin{lem}\label{lem:regen}
Under $\mathbb{P}_\lambda$, the sequence
\begin{align*}
\left( (X_{k+1}-X_k)_{\tau_n\leq k < \tau_{n+1}}, \tau_{n+1}-\tau_n \right)_{n\geq 1}
\end{align*}
is a stationary 1-dependent sequence. Moreover, for any $\lambda_1>0$ there are constants $c,C >0$, such that for all $\lambda \in (0,\lambda_1]$ we have
\begin{align}\label{tauexpbounds}
\mathbb{E}_\lambda [\exp(c\lambda^2\tau_1)] \leq C, \quad  \quad \mathbb{E}_\lambda [\exp(c\lambda^2(\tau_2-\tau_1))] \leq C
\end{align}
and
\begin{align*}
\mathbb{E}_\lambda [\exp(c\lambda||X_{\tau_1}||)] \leq C, \quad  \quad \mathbb{E}_\lambda [\exp(c\lambda||X_{\tau_2}-X_{\tau_1}||)] \leq C .
\end{align*}

\end{lem}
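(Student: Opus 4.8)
The plan is to construct the regeneration times $(\tau_k)_k$ explicitly following \cite{einstein} and then extract the three claimed properties from that construction. First I would recall the construction: one fixes a large constant $L$, declares a candidate regeneration at a hitting time $T_{mL/\lambda}$ if (a) the walk, after reaching $\mathcal{H}_{mL/\lambda}$, never backtracks below $\mathcal{H}_{(m-1)L/\lambda}$ for the remainder of its life, and (b) a certain finite ``local picture'' near that hyperplane occurred so that the increment can be made independent of the environment to the left. Using the standard argument (cf. \cite{shen2002}, \cite{BFGH}) that the walk is ballistic in direction $e_1$, one shows that infinitely many such times exist $\mathbb{P}_\lambda$-a.s., so $\tau_1$ and inductively $\tau_{n+1}=\tau_1\circ\theta_{\tau_n}+\tau_n$ are well defined and finite. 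The 1-dependence and stationarity of the blocks $\left((X_{k+1}-X_k)_{\tau_n\le k<\tau_{n+1}},\tau_{n+1}-\tau_n\right)_{n\ge 1}$ is then the usual consequence: conditioning on the configuration up to a regeneration time, the future is distributed like the walk started afresh conditioned on never backtracking, and the one extra hyperplane of buffer in the definition makes consecutive blocks depend only through adjacent pieces of environment, hence 1-dependent rather than independent.

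For the moment bounds, I would reduce everything to a single tail estimate: there is a constant $a>0$ such that for all $\lambda\in(0,\lambda_1]$ and all $n$,
\begin{align*}
\mathbb{P}_\lambda(\tau_1 > n) \le C\exp(-a\lambda^2 n).
\end{align*}
Granting this, $\mathbb{E}_\lambda[\exp(c\lambda^2\tau_1)]\le C$ follows for $c<a$ by summing the geometric-type series, and the same for $\tau_2-\tau_1$ since its law is that of $\tau_1$ under the conditioned measure, whose tail differs only by a bounded factor $1/\mathbb{P}_\lambda(\text{never backtrack})$, and that probability is bounded below uniformly in $\lambda\le\lambda_1$ by uniform ellipticity together with the coupling to a drifting one-dimensional walk as in \eqref{lowerbounde1}. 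The displacement bounds $\mathbb{E}_\lambda[\exp(c\lambda\|X_{\tau_1}\|)]\le C$ then come for free: between regeneration times the walk takes at most $\tau_1$ steps, so $\|X_{\tau_1}\|\le \tau_1$, and one can feed $c\lambda\|X_{\tau_1}\|\le c\lambda\tau_1\le (c/\lambda_1)\cdot\lambda^2\tau_1\cdot(\lambda_1/\lambda)$ — more cleanly, just use $\|X_{\tau_1}\|\le\tau_1$ and $\lambda\le\lambda_1$ to bound $c\lambda\|X_{\tau_1}\|\le (c\lambda_1)\tau_1$, and control $\mathbb{E}_\lambda[\exp(c'\tau_1)]$... but that last quantity is NOT bounded uniformly, so instead one uses $\lambda\|X_{\tau_1}\| \le \sqrt{\lambda^2\tau_1}\cdot\sqrt{\|X_{\tau_1}\|^2/\tau_1}\le\tfrac12(\lambda^2\tau_1 + \|X_{\tau_1}\|^2/\tau_1)\le\tfrac12(\lambda^2\tau_1+\tau_1\cdot\text{const})$; cleanest is to establish directly the joint tail $\mathbb{P}_\lambda(\|X_{\tau_1}\|>n)\le C\exp(-a\lambda n)$ from the exponential moment of $\tau_1$ via $\|X_{\tau_1}\|\le\tau_1$ and Markov at the right scale.

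The heart of the matter, and the main obstacle, is the tail bound $\mathbb{P}_\lambda(\tau_1>n)\le C\exp(-a\lambda^2 n)$ \emph{with the $\lambda^2$ scaling uniform over small $\lambda$}. The issue is that as $\lambda\downarrow 0$ the drift is of order $\lambda$, the walk covers distance $n$ in time of order $n/\lambda$, and regenerations in a strip of $\lambda$-dependent width $L/\lambda$ occur on the diffusive/ballistic timescale $\lambda^{-2}$; one must track these $\lambda$-dependences carefully through the renewal construction. The argument is exactly the one carried out in \cite{einstein}; the only additional remark needed here is that their restriction $\lambda\in(0,\lambda_u)$ was cosmetic, since all estimates are monotone or continuous in $\lambda$ on any compact interval $(0,\lambda_1]$ and uniform ellipticity provides all bounds with constants depending only on $\delta$ and $\lambda_1$. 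I would therefore present the lemma's proof as: recall the construction verbatim from \cite{einstein}, note 1-dependence and stationarity as immediate, and verify that the exponential-moment estimates proved there for $\lambda<\lambda_u$ in fact hold for $\lambda\in(0,\lambda_1]$ by inspecting that every constant in their argument is controlled by $\delta$ and an upper bound on $\lambda$.
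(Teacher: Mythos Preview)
Your final plan --- recall the construction from \cite{einstein}, note 1-dependence and stationarity as standard, and observe that the restriction $\lambda<\lambda_u$ in their moment bounds is inessential since every constant depends only on $\delta$ and an upper bound for $\lambda$ --- is exactly what the paper does. The paper gives no self-contained proof of this lemma; it simply defers to \cite{einstein} with the same remark about extending the range of $\lambda$.

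One caution about your intermediate reasoning, though: the attempt to extract the displacement bound $\mathbb{E}_\lambda[\exp(c\lambda\|X_{\tau_1}\|)]\le C$ as a corollary of the time bound via the crude inequality $\|X_{\tau_1}\|\le\tau_1$ cannot work. From $\|X_{\tau_1}\|\le\tau_1$ and the tail $\mathbb{P}_\lambda(\tau_1>n)\le Ce^{-a\lambda^2 n}$ you get only $\mathbb{P}_\lambda(\|X_{\tau_1}\|>n)\le Ce^{-a\lambda^2 n}$, and summing yields $\mathbb{E}_\lambda[\exp(c\lambda\|X_{\tau_1}\|)]<\infty$ only when $c\lambda<a\lambda^2$, i.e.\ only for $\lambda$ bounded \emph{below}, not uniformly as $\lambda\downarrow 0$. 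The two bounds genuinely live on different scales: a regeneration occurs after traversing a geometric number of strips of width $L/\lambda$, so the displacement is of order $\lambda^{-1}$, whereas the time to cross each strip (drift of order $\lambda$) is of order $\lambda^{-2}$. In \cite{einstein} the displacement bound is established by its own argument, not deduced from the $\tau_1$ bound, and you should not present it as a corollary.
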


\medskip

We also have a lower bound for the inter-regeneration time (see (21) in \cite{einstein}), where for any $\lambda_1>0$ there is a constant $c>0$, such that 
\begin{align}\label{reglowerbound}
\mathbb{E}_\lambda [\lambda^2(\tau_2-\tau_1)] \geq c  
\end{align}
for all $\lambda \in (0,\lambda_1]$.
If \eqref{UE} is satisfied with $\delta \leq \frac{1}{2}$, $c$ and $C$ in Lemma \ref{lem:regen} and in \eqref{reglowerbound} can be chosen only depending on the dimension.
Using the exponential moment estimates on the regeneration times, it follows that in order to study the convergence in distribution of $\frac{1}{\sqrt{n}} (M_n,N_n)$, it suffices to consider 
\begin{align*}
\frac{1}{\sqrt{\tau_n}}\big( M_{\tau_n},N_{\tau_n}\big). 
\end{align*}
To this subsequence, we may apply the functional central limit theorem for sums of 1-dependent random variables, see \cite{Bi56} to obtain the following result.

\medskip

\begin{prop}\label{prop:clt}
For any $\lambda>0$, the process $\big( \frac{1}{\sqrt{n}} (M_{\lfloor tn\rfloor},N_{\lfloor tn\rfloor}); 0\leq t\leq 1\big)$ converges in distribution under $\mathbb{P}_\lambda$ to a $2d$-dimensional Brownian motion $(\widehat{M}_t, \widehat{N}_t )$.
We write $M$ for $\widehat{M}_1$ and $N$ for $\widehat{N}_1$.
\end{prop}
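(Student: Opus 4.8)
The strategy is the standard one for proving a quenched/annealed functional CLT via the regeneration structure of Lemma \ref{lem:regen}. The plan is to first establish the result along the regeneration subsequence $\frac{1}{\sqrt{n}}(M_{\tau_n}, N_{\tau_n})$ and then transfer it to the full process by controlling the fluctuations of $\tau_n$ and of the increments between regeneration times. First I would record that, by Lemma \ref{lem:regen}, the blocks
\[
Z_n = \Big( (X_{k+1}-X_k)_{\tau_n \le k < \tau_{n+1}},\ \tau_{n+1}-\tau_n \Big), \quad n\ge 1,
\]
form a stationary $1$-dependent sequence with exponential moment bounds on $\tau_{n+1}-\tau_n$ and on $\|X_{\tau_{n+1}}-X_{\tau_n}\|$; in particular all second moments exist. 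From $Z_n$ one reads off the block increments of $M$ and of $N$. For $N$ this is immediate: $N_{\tau_{n+1}}-N_{\tau_n} = (X_{\tau_{n+1}}-X_{\tau_n}) - (\tau_{n+1}-\tau_n)v(\lambda)$, which is a functional of $Z_n$ with finite second moment, and centered since $v(\lambda) = \mathbb{E}_\lambda[X_{\tau_2}-X_{\tau_1}]/\mathbb{E}_\lambda[\tau_2-\tau_1]$ (after accounting for the first, differently-distributed block, which contributes nothing in the limit). For $M$ one uses that $M_{n+1}-M_n = (X_{n+1}-X_n) - E_{\omega,\lambda}^{X_n}[X_1-X_0]$ is a martingale difference sequence with respect to the natural filtration; summing over a regeneration block expresses $M_{\tau_{n+1}}-M_{\tau_n}$ as a function of $Z_n$ together with the environment read along the block, and the regeneration property ensures this is again (essentially) $1$-dependent and stationary with finite second moment and mean zero (the martingale property gives $\mathbb{E}_\lambda[M_{\tau_{n+1}}-M_{\tau_n}]=0$).

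Next I would apply the functional central limit theorem for partial sums of stationary $1$-dependent $\mathbb{R}^{2d}$-valued sequences with finite second moments (Billingsley \cite{Bi56}) to the array indexed by $n$, obtaining that $\frac{1}{\sqrt{m}}(M_{\tau_{\lfloor sm\rfloor}}, N_{\tau_{\lfloor sm\rfloor}})$ converges in $D([0,1])$ to a $2d$-dimensional Brownian motion with some covariance matrix; the limit is a genuine Brownian motion (continuous) because the maximal block contribution is negligible, again by the exponential moment bound. Simultaneously, the law of large numbers for $1$-dependent sequences gives $\tau_n/n \to \mathbb{E}_\lambda[\tau_2-\tau_1] =: \bar\tau \in (0,\infty)$ a.s., with $\bar\tau>0$ by \eqref{reglowerbound}. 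Combining these via a random time-change (Billingsley's theorem on convergence under random time substitution), and using $n(t) := \max\{k: \tau_k \le tn\}$ so that $\tau_{n(tn)} \approx tn/\bar\tau \cdot \bar\tau = tn$, yields convergence of $\frac{1}{\sqrt{n}}(M_{\tau_{n(tn)}}, N_{\tau_{n(tn)}})$ to a time-changed Brownian motion, which is again Brownian with rescaled covariance.

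Finally I would pass from the regeneration skeleton to the full processes $M_{\lfloor tn\rfloor}, N_{\lfloor tn\rfloor}$ by bounding
\[
\max_{0\le t\le 1} \big\| M_{\lfloor tn\rfloor} - M_{\tau_{n(tn)}} \big\| \le \max_{1\le k \le Kn} \sup_{\tau_k \le j < \tau_{k+1}} \big\| M_j - M_{\tau_k}\big\|
\]
for a suitable deterministic $K$, and likewise for $N$; each such oscillation is controlled by the block length $\tau_{k+1}-\tau_k$ and the per-step increment bound, so the exponential moment estimate in Lemma \ref{lem:regen} forces $\frac{1}{\sqrt n}$ times this maximum to $0$ in probability. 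This gives the claimed convergence to a $2d$-dimensional Brownian motion $(\widehat M_t, \widehat N_t)$, and we set $M=\widehat M_1$, $N=\widehat N_1$. The main obstacle, and the place where care is genuinely needed, is the treatment of the $M$-component: unlike $N$, the block increment of $M$ is not a function of the displacement and time alone but also of the environment traversed during the block, so one must verify that the regeneration construction of \cite{einstein} indeed decouples successive such increments into a stationary $1$-dependent sequence, and that the relevant second moment $\mathbb{E}_\lambda[\|M_{\tau_2}-M_{\tau_1}\|^2]$ is finite — which follows by writing $M_{\tau_2}-M_{\tau_1}$ as a sum of at most $\tau_2-\tau_1$ increments each bounded by a constant and invoking the exponential moment bound on $\tau_2-\tau_1$. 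Everything else is a routine assembly of standard $1$-dependent FCLT, LLN, and random-time-change arguments.
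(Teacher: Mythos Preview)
Your proposal is correct and follows essentially the same approach as the paper's (very terse) argument: reduce to the regeneration subsequence $\frac{1}{\sqrt{\tau_n}}(M_{\tau_n},N_{\tau_n})$ using the exponential moment bounds of Lemma~\ref{lem:regen}, and then apply the functional CLT for stationary $1$-dependent sequences from \cite{Bi56}. Your more detailed plan (LLN for $\tau_n/n$, random time change, oscillation control between regenerations, and the caveat that the $M$-block increments depend on the environment along the block and hence rely on the slab decoupling in the construction of \cite{einstein}) spells out precisely the steps the paper leaves implicit.
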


\medskip

\begin{lem} \label{lem:mombound}
For any $p\in \mathbb{N}$ and $\lambda_1>0$ there exists a $C_p>0$ depending only on $p$, $\lambda_1$, the dimension $d$, and the ellipticity constant $\delta$, such that for any $0<\lambda<\lambda_1$,
\begin{align*}
\mathbb{E}_{\lambda} \left[ \max_{0\leq k\leq n/\lambda^2} \left|\left| \lambda X_k \right|\right|^p\right] \leq C_pn^p.
\end{align*}
\end{lem}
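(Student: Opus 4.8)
The plan is to bound the maximal displacement $\max_{0\le k\le n/\lambda^2}\|\lambda X_k\|$ by reducing to the regeneration structure from Lemma \ref{lem:regen} and then controlling the number of regenerations that can occur before time $n/\lambda^2$. First I would write $X_k$ in terms of the regeneration points: for $k$ in the block $[\tau_m,\tau_{m+1})$ we have $\|X_k\| \le \|X_{\tau_1}\| + \sum_{j=1}^{m}\|X_{\tau_{j+1}}-X_{\tau_j}\| + \max_{\tau_m\le k<\tau_{m+1}}\|X_k-X_{\tau_m}\|$, and the last term is itself at most $\tau_{m+1}-\tau_m$ (each step changes $X$ by a unit vector). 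So it suffices to control, for $K(t) := \#\{m\ge 1: \tau_m \le t\}$ with $t = n/\lambda^2$, the quantities $\|X_{\tau_1}\|$, $\sum_{j=1}^{K(t)}\|X_{\tau_{j+1}}-X_{\tau_j}\|$, and $\max_m(\tau_{m+1}-\tau_m)$ over $m\le K(t)$; the point is that the exponential moment bounds \eqref{tauexpbounds} in Lemma \ref{lem:regen} are tailored exactly to the scaling $\lambda^2\tau$ and $\lambda\|X_\tau\|$.

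Next I would control $K(n/\lambda^2)$. By \eqref{reglowerbound}, the inter-regeneration times have $\mathbb{E}_\lambda[\lambda^2(\tau_2-\tau_1)]\ge c$, and by \eqref{tauexpbounds} they have exponential upper tails on scale $\lambda^{-2}$, so $K(n/\lambda^2)$ concentrates around a constant multiple of $n$ and has exponential upper tails: there is $c'>0$ with $\mathbb{P}_\lambda(K(n/\lambda^2)\ge Ln) \le Ce^{-c'L n}$ for $L$ large, uniformly in $\lambda\in(0,\lambda_1]$. This follows from a standard union bound: $\{K(n/\lambda^2)\ge Ln\}$ implies $\tau_{Ln}\le n/\lambda^2$, i.e. $\sum_{j=1}^{Ln}\lambda^2(\tau_{j+1}-\tau_j) \le n$, and since these increments are $1$-dependent with uniformly bounded exponential moments on the right scale, a Chernoff-type estimate (splitting into even- and odd-indexed terms to get independence) gives the bound. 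The $1$-dependence is handled by this even/odd splitting throughout.

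Then I would assemble the $p$-th moment bound. Write $\max_k \|\lambda X_k\|^p \lesssim_p (\lambda\|X_{\tau_1}\|)^p + \big(\lambda\sum_{j=1}^{K}\|X_{\tau_{j+1}}-X_{\tau_j}\|\big)^p + \big(\lambda^2\max_{m\le K}(\tau_{m+1}-\tau_m)\big)^p\lambda^{-p}$; wait — more carefully, the last, "overshoot inside a block" term contributes $\max_{m\le K}(\tau_{m+1}-\tau_m)$ to $\|X_k\|$, hence $\lambda\max_m(\tau_{m+1}-\tau_m) = \lambda^{-1}\cdot\lambda^2\max_m(\tau_{m+1}-\tau_m)$, so one should use the exponential moment of $\lambda^2(\tau_{m+1}-\tau_m)$ together with a union bound over $m\le K$ and the tail of $K$. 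For the middle, dominant term, condition on $K=K(n/\lambda^2)$: on $\{K\le Ln\}$, $\mathbb{E}_\lambda\big[(\lambda\sum_{j=1}^{K}\|X_{\tau_{j+1}}-X_{\tau_j}\|)^p\big] \le \mathbb{E}_\lambda\big[(\lambda\sum_{j=1}^{Ln}\|X_{\tau_{j+1}}-X_{\tau_j}\|)^p\big]$, and since each $\lambda\|X_{\tau_{j+1}}-X_{\tau_j}\|$ has uniformly bounded exponential moments and the sequence is $1$-dependent, the even/odd split plus the standard estimate $\mathbb{E}[(\sum_{j=1}^{N}\xi_j)^p]\le C_p N^p$ for nonnegative $1$-dependent $\xi_j$ with bounded $p$-th moments yields $\le C_p (Ln)^p \lesssim_p n^p$. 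Summing over the (exponentially small) contributions of $\{K\in(Ln, (L+1)n]\}$ for $L$ large converges and only inflates the constant, using Cauchy–Schwarz against the tail bound from the previous paragraph.

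The main obstacle I anticipate is making the uniformity in $\lambda\in(0,\lambda_1]$ genuinely clean: every estimate must carry the correct power of $\lambda$ so that the $\lambda$'s cancel and the final bound is $C_p n^p$ with $C_p$ independent of $\lambda$. This is exactly why the statement pairs $\lambda X_k$ with the time horizon $n/\lambda^2$ — the regeneration estimates in Lemma \ref{lem:regen} and \eqref{reglowerbound} are stated on precisely the scales $\lambda^2\tau$ and $\lambda\|X_\tau\|$, so the bookkeeping, while somewhat tedious, goes through; one just has to be disciplined about never introducing a bare $\lambda^{-1}$ without an accompanying exponential moment to absorb it. A secondary technical point is the $1$-dependence, but this is routine: everywhere one would otherwise invoke independence of the regeneration increments, split the sum (or the event) into even- and odd-indexed parts, each of which is i.i.d., and combine via the triangle inequality in $L^p$ or a union bound.
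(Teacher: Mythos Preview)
The paper does not give a self-contained argument here: it simply cites Lemma~8 of \cite{einstein} and notes that the constant there can be taken uniform in $\lambda\in(0,\lambda_1]$. Your regeneration-based sketch is very likely the strategy behind that lemma, and the scaling bookkeeping you describe (pairing $\lambda^2\tau$ with $\lambda\|X_\tau\|$ so that the sum over $K(n/\lambda^2)\sim Cn$ blocks gives $C_p n^p$) is exactly right for the dominant ``sum over regeneration increments'' term.

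There is, however, a genuine gap in your treatment of the within-block overshoot. You bound $\max_{\tau_m\le k<\tau_{m+1}}\|X_k-X_{\tau_m}\|$ by $\tau_{m+1}-\tau_m$ and then write $\lambda(\tau_{m+1}-\tau_m)=\lambda^{-1}\cdot\lambda^2(\tau_{m+1}-\tau_m)$, asserting that the exponential moment of $\lambda^2(\tau_{m+1}-\tau_m)$ will ``absorb'' the $\lambda^{-1}$. It does not: a union bound over the roughly $Cn$ blocks gives only $\mathbb{E}_\lambda\big[(\lambda^2\max_{m}(\tau_{m+1}-\tau_m))^p\big]\le C_p(1+\log n)^p$, hence
\[
\mathbb{E}_\lambda\big[(\lambda\max_{m}(\tau_{m+1}-\tau_m))^p\big]\le C_p\,\lambda^{-p}(1+\log n)^p,
\]
and the right-hand side blows up as $\lambda\downarrow 0$ (already for $n=1$ it is of order $\lambda^{-p}$). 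So the crude step-count bound on the overshoot is too weak for a constant independent of $\lambda$, which is precisely the content of the lemma. What is actually needed is control of $\max_{\tau_m\le k<\tau_{m+1}}\lambda\|X_k-X_{\tau_m}\|$ at the \emph{correct} scale, i.e.\ uniformly bounded (exponential) moments for this quantity itself. This does hold, and follows from the regeneration construction rather than from Lemma~\ref{lem:regen} as stated: the $e_1$-displacement within a block is confined between the regeneration levels $X_{\tau_m}\cdot e_1-O(1/\lambda)$ and $X_{\tau_{m+1}}\cdot e_1$, while the transverse components form a bounded-increment martingale over $\tau_{m+1}-\tau_m$ steps and hence are of order $\sqrt{\tau_{m+1}-\tau_m}\sim\lambda^{-1}$ by Azuma. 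You would need to insert such an argument (or invoke the corresponding estimate from \cite{einstein}) to close the gap.
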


\begin{proof}
The lemma follows from the proof of Lemma 8 in \cite{einstein}, 
noting that the constant $C_p$ there can be chosen
depending only on $p$, an upper bound for $\lambda$, the dimension $d$, and the ellipticity constant $\delta$.
\end{proof}

\subsection{Proof of Theorem \ref{thm:diff1}}

The arguments in this section are inspired by \cite{lebrost1994} where a weak form of the Einstein relation was proved for a large class of models.
Let us abbreviate $\bar{\lambda}=\lambda-\lambda_0$ and begin by writing, with $t = t_\lambda = 
\alpha/ \bar{\lambda}^2$,
\begin{equation}\label{massaendern}
\frac{\tfrac{1}{t}\mathbb{E}_\lambda[X_{t}] - v(\lambda_0)}{\lambda-\lambda_0}
= \mathbb{E}_\lambda \left[ \tfrac{\bar{\lambda}}{\alpha} \left( X_{t} - t\cdot v(\lambda_0) \right)\right]
= \mathbb{E}_{\lambda_0} \left[ \tfrac{\bar{\lambda}}{\alpha} \left( X_{t} - t\cdot v(\lambda_0) \right)\frac{dP_{\omega,\lambda}}{dP_{\omega,\lambda_0}}(X_s; 0\leq s\leq t)\right]
\end{equation}
as an expectation with respect to the reference measure $\mathbb{P}_{\lambda_0}$. For a nearest-neighbor path $(x_1,\dots ,x_m)$, we have
\begin{align*}
\frac{dP_{\omega,\lambda}}{dP_{\omega,\lambda_0}}(x_1,\dots ,x_m)
= \prod_{k=1}^m \frac{p_{\omega,\lambda}(x_{k-1},x_{k}-x_{k-1})}{p_{\omega,\lambda_0}(x_{k-1},x_{k}-x_{k-1})}
= \prod_{k=1}^m e^{\bar{\lambda} (x_k-x_{k-1})\cdot e_1} \frac{\sum_{|e|=1}e^{\lambda_0 e\cdot e_1} \omega(x_{k-1},x_{k-1}+e)}
{\sum_{|e|=1}e^{\lambda e\cdot e_1} \omega(x_{k-1},x_{k-1}+e)}.
\end{align*}
Now write in the denominator $e^{\lambda e\cdot e_1}=e^{\bar{\lambda} e\cdot e_1}e^{\lambda_0 e\cdot e_1}$ and expand the first exponential $e^{z}=1+z+z^2/2+r_1(z)$ with $|r_1(z)|\leq |z|^3$ for $|z|\leq 1$ to get
\begin{align*}
& \frac{dP_{\omega,\lambda}}{dP_{\omega,\lambda_0}}(x_1,\dots ,x_m) \\
& = \exp \left\{ \bar{\lambda} x_m\cdot e_1 - \sum_{k=1}^m \log \left( 1+ \bar{\lambda}d_{\omega,\lambda_0}(x_{k-1}) + \frac{1}{2} \bar{\lambda}^2 d_{\omega,\lambda_0}^{(2)}(x_{k-1}) + r_1(\bar{\lambda})\right) \right\} ,
\end{align*}
where we wrote 
\begin{align*}
d_{\omega,\lambda_0} (x) = \frac{\sum_{|e|=1} \omega(x,x+e)e^{\lambda_0 e\cdot e_1} e\cdot e_1}{\sum_{|e|=1} \omega(x,x+e)e^{\lambda_0 e \cdot e_1}} = E_{\omega,\lambda_0}^{x}[(X_1-X_0)\cdot e_1]
\end{align*}
for the local drift in direction $e_1$ and 
\begin{align*}
d_{\omega,\lambda_0}^{(2)} (x) = \frac{\sum_{|e|=1} \omega(x,x+e)e^{\lambda_0e\cdot e_1}(e \cdot e_1)^2}{\sum_{|e|=1} \omega(x,x+e)e^{\lambda_0 e\cdot e_1}} = E_{\omega,\lambda_0}^{x}[((X_1-X_0)\cdot e_1)^2]
\end{align*}
for the expected squared displacement. Expanding the logarithm as $\log(1+z) = z-z^2/2+r_2(z)$ with $|r_2(z)|\leq |z|^3$ for $|z|\leq 1/2$, we obtain 
\begin{align*}
\exp\left\{ \bar{\lambda} x_m\cdot e_1-\sum_{k=1}^m \left( \bar{\lambda} d_{\omega,\lambda_0}(x_{k-1})+\frac{\bar{\lambda}^2}{2}( d_{\omega,\lambda_0}^{(2)} (x_{k-1})- d_{\omega,\lambda_0} (x_{k-1})^2)+h(\bar{\lambda}) \right) \right\}
\end{align*}
where the function $h$ satisfies $|h(z)| \leq c|z|^3$ if $|z|\leq 1/2$. If we set now $m= t= \alpha/\bar{\lambda}^2$, this yields 
\begin{align} \label{density}
& \quad G_{\omega,\lambda_0}(\bar{\lambda},t): = \frac{dP_{\omega,\lambda}}{dP_{\omega,\lambda_0}}(X_k; 0\leq k\leq t)\\ \notag
& = \exp\left\{ \bar{\lambda} \left( X_{\alpha/\bar{\lambda}^2}\cdot e_1 - \sum_{k=1}^{\alpha/\bar{\lambda}^2} d_{\omega,\lambda_0} (X_{k-1}) \right) - \frac{\bar{\lambda}^2}{2} \sum_{k=1}^{\alpha/\bar{\lambda}^2} \left(d_{\omega,\lambda_0}^{(2)} (X_{k-1}) - d_{\omega,\lambda_0} (X_{k-1})^2\right) + o(1) \right\}  .
\end{align}
By Proposition \ref{prop:clt}, $\bar{\lambda} \big( X_{\alpha/\bar{\lambda}^2}\cdot e_1 - \sum_{k=1}^{\alpha/\bar{\lambda}^2}d_{\omega,\lambda_0} (X_{k-1}) \big)$ converges in distribution to $\widehat{M}_\alpha\cdot e_1$. To infer the convergence of the complete expression for the density and to obtain convergence of the expectations in \eqref{massaendern},  we next show $L^p$-boundedness of the density.

Recall $G_{\omega,\lambda_0}(\bar{\lambda},t)$ in \eqref{density}, and let $p\geq 1$. Then
\begin{align*}
p \log G_{\omega,\lambda_0}(\bar{\lambda},t) 
& = p \bar{\lambda} X_t\cdot e_1 - p \sum_{k=1}^t \log \left( \frac{\sum_{|e|=1} \omega(X_{k-1},X_{k-1}+e)e^{\lambda e\cdot e_1} }{\sum_{|e|=1} \omega(X_{k-1},X_{k-1}+e)e^{\lambda_0 e \cdot e_1}}\right) \\
& = p\bar{\lambda} X_t \cdot e_1 - \sum_{k=1}^t \log \left( \frac{\sum_{|e|=1} \omega(X_{k-1},X_{k-1}+e)e^{(\lambda_0+p\bar{\lambda}) e\cdot e_1} }{\sum_{|e|=1} \omega(X_{k-1},X_{k-1}+e)e^{\lambda_0 e \cdot e_1}}\right) + R_{\omega,\lambda_0}(\bar{\lambda},t) \\
& = \log G_{\omega,\lambda_0}(p\bar{\lambda},t) + R_{\omega,\lambda_0}(\bar{\lambda},t),
\end{align*}
with a remainder term 
\begin{align*}
R_{\omega,\lambda_0}(\bar{\lambda},t) = \sum_{k=1}^t \log \left( \frac{\sum_{|e|=1} \omega(X_{k-1},X_{k-1}+e)e^{(\lambda_0+p\bar{\lambda}) e\cdot e_1} }{\sum_{|e|=1} \omega(X_{k-1},X_{k-1}+e)e^{\lambda_0 e \cdot e_1}} \right) - p \log \left( \frac{\sum_{|e|=1} \omega(X_{k-1},X_{k-1}+e)e^{\lambda e\cdot e_1} }{\sum_{|e|=1} \omega(X_{k-1},X_{k-1}+e)e^{\lambda_0 e \cdot e_1}} \right) .
\end{align*}
After expanding the exponential and then the logarithm as for \eqref{density}, we get
\begin{align*}
R_{\omega,\lambda_0}(\bar{\lambda},t)& = \sum_{k=1}^t \left( p\bar{\lambda}d_{\omega,\lambda_0} (X_{k-1}) +\frac{1}{2} p^2 \bar{\lambda}^2 (d_{\omega,\lambda_0}^{(2)} (X_{k-1})-d_{\omega,\lambda_0} (X_{k-1})^2) +o(p^2\bar{\lambda}^2)  \right) \\
& \qquad \quad -p \left( \bar{\lambda}d_{\omega,\lambda_0} (X_{k-1}) +\frac{1}{2}  \bar{\lambda}^2 (d_{\omega,\lambda_0}^{(2)} (X_{k-1})-d_{\omega,\lambda_0} (X_{k-1})^2) +o(\bar{\lambda}^2)  \right) \\
& \leq (p^2-p) \alpha + o(1) \leq p^2 \alpha +1
\end{align*} 
for $|\bar{\lambda}|$ smaller than some $\eta>0$. For such a choice of $\lambda$, 
\begin{equation}\label{mompbound}
\mathbb{E}_{\lambda_0} [G_{\omega,\lambda_0}(\bar{\lambda},t) ^p ] \leq \mathbb{E}_{\lambda_0} [G_{\omega,\lambda_0}(p\bar{\lambda},t)  ]e^{p^2\alpha +1} = e^{p^2\alpha +1} .
\end{equation}
Consequently, $(G_{\omega,\lambda_0}(\bar{\lambda},t))_{|\bar{\lambda}|\leq \eta}$ is uniformly bounded in $L^p(\mathbb{P}_{\lambda_0})$. Since this implies convergence of expectations, we get for the density \eqref{density}
\begin{align*}
\frac{dP_{\omega,\lambda}}{dP_{\omega,\lambda_0}}(X_k; 0\leq k\leq t) \xrightarrow[\bar{\lambda} \rightarrow 0 ]{d} \exp \left\{ \widehat{M}_\alpha\cdot e_1 - \tfrac{1}{2} \mathbb{E}_{\lambda_0}[(\widehat{M}_\alpha\cdot e_1)^2] \right\}
\end{align*} 
under $\mathbb{P}_{\lambda_0}$. By Proposition \ref{prop:clt}, we have also the weak convergence of the product
\begin{align*}
\tfrac{\bar{\lambda}}{\alpha} \left( X_{t} - t\cdot v(\lambda_0) \right)\frac{dP_{\omega,\lambda}}{dP_{\omega,\lambda_0}}(X_s; 0\leq s\leq t) \xrightarrow[\bar{\lambda} \rightarrow 0 ]{d} \frac{1}{\alpha} \widehat{N}_\alpha  \exp \left\{ \widehat{M}_\alpha \cdot e_1 - \tfrac{1}{2} \mathbb{E}_{\lambda_0}[(\widehat{M}_\alpha\cdot e_1)^2] \right\} .
\end{align*}
Moreover, this product is by Lemma \ref{lem:mombound} and the calculations above bounded in $L^2(\mathbb{P}_{\lambda_0})$. In particular, it is uniformly integrable and so the expectations converge as well, 
\begin{align*}
\mathbb{E}_\lambda \left[ \tfrac{\bar{\lambda}}{\alpha} \left( X_{t} - t\cdot v(\lambda_0) \right)\right]
\xrightarrow[\bar{\lambda} \rightarrow 0 ]{ }
\frac{1}{\alpha} \mathbb{E}_{\lambda_0} \left[ \widehat{N}_\alpha  \exp \{ \widehat{M}_\alpha\cdot e_1 - \tfrac{1}{2} \mathbb{E}_{\lambda_0}[(\widehat{M}_\alpha\cdot e_1)^2] \} \right] .
\end{align*}
By Girsanov's theorem, the limit is equal to the covariance $Cov_{\lambda_0}(M,N) e_1$ (recalling $M= \widehat{M}_1, N= \widehat{N}_1$).  \qed

\subsection{Proof of Theorem \ref{thm:diff2}}

Define $\gamma_n=\mathbb{E}_\lambda[\tau_n]$ and for $t>0$ fixed, let $n\geq 0$ be such that $\gamma_n \leq t < \gamma_{n+1}$. Then 
\begin{align*}
\left|\left| \frac{1}{t} \mathbb{E}_\lambda[X_t] - \frac{1}{\gamma_n} \mathbb{E}_\lambda[X_{\gamma_n}]\right|\right|
& \leq \frac{1}{t} \left|\left| \mathbb{E}_\lambda[X_t] -  \mathbb{E}_\lambda[X_{\gamma_n}]\right|\right|
+ \mathbb{E}_\lambda[||X_{\gamma_n}||] \left| \frac{1}{t} - \frac{1}{\gamma_n} \right| \\
& \leq \frac{\gamma_{n+1}-\gamma_n}{\gamma_n} + \gamma_n \frac{t-\gamma_n}{t\gamma_n} \\
& \leq 2\frac{\gamma_{n+1}-\gamma_n}{\gamma_n} \leq \frac{c}{n},
\end{align*}
by the moment bounds of Lemma \ref{lem:mombound}. Next, we have 
\begin{align*}
\frac{1}{\gamma_n}  \left|\left| \mathbb{E}_\lambda[X_{\gamma_n}] - \mathbb{E}_\lambda[X_{\tau_n}]\right|\right| \leq \frac{1}{\gamma_n} \mathbb{E}_\lambda[(\tau_n-\gamma_n)^2]^{1/2} \leq C \frac{\sqrt{n}}{\gamma_n} \leq \frac{C}{\sqrt{n}} .
\end{align*}
By the law of large numbers and stationarity of the inter-regeneration times, the speed is given by  
\begin{align*}
v(\lambda) = \frac{\mathbb{E}_{\lambda}[X_{\tau_2}-X_{\tau_1}]}{\mathbb{E}_{\lambda}[\tau_2-\tau_1]}
= \frac{\mathbb{E}_{\lambda}[X_{\tau_n}-X_{\tau_1}]}{\mathbb{E}_{\lambda}[\tau_n-\tau_1]} = \frac{\mathbb{E}_{\lambda}[X_{\tau_n}] -\mathbb{E}_{\lambda}[X_{\tau_1}]}{\gamma_n - \gamma_1}
\end{align*}
such that we have
\begin{align*}
\left|\left| \frac{\mathbb{E}_\lambda [X_{\tau_n}]}{\gamma_n} - v(\lambda) \right|\right|
& \leq ||\mathbb{E}_\lambda[X_{\tau_n}]||\left| \frac{1}{\gamma_n}-\frac{1}{\gamma_n-\gamma_1}\right| 
			+ \frac{1}{\gamma_n-\gamma_1} \left|\left|\mathbb{E}_{\lambda}[X_{\tau_1}] \right| \right| \\
& \leq \gamma_n \frac{\gamma_1}{\gamma_n(\gamma_n-\gamma_1)} + \frac{\gamma_1}{\gamma_n-\gamma_1}\\
& \leq \frac{C}{n} .
\end{align*}
Putting the above estimates together, we get
\begin{align}\label{regenbound}
\left|\left|\frac{1}{t} \mathbb{E}_\lambda[X_t] - v(\lambda) \right|\right| \leq \frac{C}{\sqrt{n}} .
\end{align}
Recall that we set $\bar{\lambda} = \lambda - \lambda_0$ and $t=\alpha/\bar{\lambda}^2$.
Hence $t < \gamma_{n+1} \leq cn$, implying
\begin{align*}
\frac{1}{\sqrt{n}} \leq \frac{c}{\sqrt{t}} = \frac{c \bar{\lambda}}{\sqrt{\alpha}}.
\end{align*}
This and the inequality \eqref{regenbound} implies the estimate of Theorem \ref{thm:diff2}. \qed

\section{Monotonicity}

\subsection{Proof of Theorem \ref{thm:monotone}}

By Fact \ref{fct:mon}, it suffices to show (strict) monotonicity of $v_1$ on $[0,\lambda_c]$. We do this by showing that the derivative on this compact interval is strictly positive. More precisely, we compare $v_1'$ with $\bar{v}'_1$, where 
\begin{align*}
\bar{v}_1(\lambda) =\frac{e^\lambda-e^{-\lambda}}{e^{\lambda}+e^{-\lambda}+2d-2} 
\end{align*}
is the speed of the random walk in a homogeneous environment $\bar\omega$, where all conductances equal 1. Since $\bar{v}'_1$ is greater than some positive $\varepsilon_0$ on $[0,\lambda_c]$, positivity of $v'_1$ follows from 
\begin{align}\label{comparev}
\sup_{\lambda \in [0,\lambda_c]} |v'_1(\lambda) - \bar{v}'_1(\lambda)| < \varepsilon_0
\end{align}
for $\delta$ close enough to 0. Let us assume already $\delta \leq \tfrac{1}{2}$. In Section 2 we constructed a coupling $(X_n^{(\omega,\lambda)},X_n^{(\bar\omega,\lambda)})_n$ between the random walk in an original environment $\omega$ and a random walk in the homogeneous environment $\bar\omega$. To keep the notation simpler, we denote $X_n^{(\omega,\lambda)}$ again by $X_n$ and $X_n^{(\bar\omega,\lambda)}$ by $\bar{X}_n$. Furthermore, define analogously to \eqref{processM} and \eqref{processN} the processes $\bar{M}_n$ and $\bar{N}_n$ in the homogeneous environment. (Of course,  $\bar{M}_n = \bar{N}_n$).
The coupling guarantees then
\begin{align} \label{couplingerror}
P(X_n-X_{n-1} \neq \bar X_n - \bar X_{n-1})\leq C\delta ,
\end{align}
so if $\delta$ is sufficiently small, the two processes will take the same steps most of the time. 
By Theorem \ref{thm:derivative} and the moment bounds in Lemma \ref{lem:mombound}, we have
\begin{align*}
 & \quad v'_1(\lambda)-\bar{v}'_1(\lambda) \\ & = 
\lim_{n\to \infty} \frac{1}{n} \big[ \operatorname{Cov}_\lambda ( M_n,N_n)_{1,1}  - \operatorname{Cov}_\lambda ( \bar M_n,\bar N_n)_{1,1} \big]\\
& =  \lim_{n\to \infty} \frac{1}{n} \big[ \operatorname{Cov}_\lambda ( M_n-\bar M_n,N_n-\bar N_n)_{1,1} + \operatorname{Cov}_\lambda ( M_n-\bar M_n,\bar N_n)_{1,1} +  \operatorname{Cov}_\lambda( \bar M_n,N_n-\bar N_n)_{1,1}\big] . 
\end{align*}
By the Cauchy-Schwarz inequality, \eqref{comparev} will follow from the following bounds:
\begin{align}
\label{limsupA} \limsup_{n\to \infty} \frac{1}{n} \operatorname{Var}_\lambda (\bar N_n)_{1,1} &\leq C \\
\label{limsupB} \limsup_{n\to \infty} \frac{1}{n} \operatorname{Var}_\lambda (\bar M_n)_{1,1} &\leq C \\
\label{limsupC} \limsup_{n\to \infty} \frac{1}{n} \operatorname{Var}_\lambda (M_n - \bar M_n)_{1,1} &\leq C\delta \\
\label{limsupD} \limsup_{n\to \infty} \frac{1}{n} \operatorname{Var}_\lambda (N_n - \bar N_n)_{1,1} &\leq g(\delta) 
\end{align}
with $g$ a function independent of $\lambda$ and $\lim_{\delta\to 0} g(\delta)=0$ (In fact, all these are actual limits). The first two bounds \eqref{limsupA} and \eqref{limsupB} follow since $\bar N_n = \bar M_n$ is a  process in the homogeneous environment with iid increments uniformly bounded in $\lambda$ and $\delta$.

For \eqref{limsupC}, observe that $ (M_n-\bar M_n)\cdot e_1 $ is again a martingale with 
\begin{align*}
& \quad \mathbb{E}_\lambda \left[ \big( (M_n-\bar M_n)\cdot e_1 - (M_{n-1}-\bar{M}_{n-1})\cdot e_1\big)^2\right] \\
& \leq 2 \mathbb{E}_\lambda \left[ \big( (X_n-X_{n-1})\cdot e_1-(\bar{X}_n-\bar{X}_{n-1})\cdot e_1\big)^2 \right]
 + 2 \mathbb{E}_\lambda \left[ \big( d_{\omega,\lambda}(X_{n-1})\cdot e_1- d_{\bar\omega,\lambda}(\bar X_{n-1})\cdot e_1\big)^2 \right]
\end{align*}
where $d_{\omega, \lambda}(x)= E^x_{\omega,\lambda}[X_1-X_0]$.
By \eqref{couplingerror}, the first term is of order $\delta$. 
We have
\begin{align} \label{couplingerror2}
||d_{\omega, \lambda}(x) - d_{\bar\omega, \lambda}(x)||\leq C\delta ,
\end{align}
so that the second term is of order at most $\delta$ as well. Consequently, 
\begin{align*}
\limsup_{n\to \infty} \frac{1}{n} \mathbb{E}_\lambda\left[ \big( (M_n-\bar M_n)\cdot e_1 \big)^2 \right] \leq C\delta . 
\end{align*}

It remains to show \eqref{limsupD}. We decompose
\begin{align*}
 (N_n -\bar N_n)\cdot e_1 = \big(X_n -\bar X_n - n(v(\lambda)-\bar v(\lambda))\big) \cdot e_1 = (M_n-\bar M_n)\cdot e_1 + Z_n ,
\end{align*}
where 
\begin{align*}
Z_n = \sum_{k=0}^{n-1} \big(d_{\omega, \lambda}(X_{k}) - d_{\bar \omega, \lambda}(X_{k})\big)\cdot e_1 - (v_1(\lambda)-\bar{v}_1(\lambda)) . 
\end{align*}
We already know that the difference of the martingales is nicely bounded and it therefore suffices to bound
\begin{align}\label{relevantvariance}
\lim_{n\to \infty} \frac{1}{n}\mathbb{E}_\lambda [(Z_n)^2] & = \lim_{n\to \infty} \frac{\mathbb{E}_\lambda [(Z_{\tau_n})^2]}{\mathbb{E}_\lambda[\tau_n]} \notag  \\
& = \frac{\mathbb{E}_\lambda [(Z_{\tau_2}-Z_{\tau_1})^2]+ 2 \mathbb{E}_\lambda [(Z_{\tau_3}-Z_{\tau_2})(Z_{\tau_2}-Z_{\tau_1})]}{\mathbb{E}_\lambda[\tau_2-\tau_1]} \notag \\
& \leq 3 \frac{\mathbb{E}_\lambda [(Z_{\tau_2}-Z_{\tau_1})^2]}{\mathbb{E}_\lambda[\tau_2-\tau_1]} ,
\end{align}
where we used the fact that $(Z_{\tau_n}-Z_{\tau_{n-1}},\tau_n-\tau_{n-1})$ is a stationary 1-dependent sequence. In fact, by Jensen's inequality it suffices to bound \eqref{relevantvariance} with $Z_n $ replaced by $\xi_n$, when 
\begin{align*}
\xi_n = \sum_{k=0}^{n-1} \big(d_{\omega, \lambda}(X_{k}) - d_{\bar\omega, \lambda}(X_{k})\big)\cdot e_1 . 
\end{align*}
 The uniform bound \eqref{couplingerror2} gives
\begin{align} \label{easybound}
\frac{\mathbb{E}_\lambda [(\xi_{\tau_2}-\xi_{\tau_1})^2]}{\mathbb{E}_\lambda[\tau_2-\tau_1]} \leq C\delta^2 \frac{\mathbb{E}_\lambda [(\tau_2-\tau_1)^2]}{\mathbb{E}_\lambda[\tau_2-\tau_1]} \leq C\frac{\delta^2}{\lambda^2},
\end{align}
where we used \eqref{tauexpbounds} and \eqref{reglowerbound} for the last inequality. Of course, this bound blows up near $\lambda=0$, but it yields for any $\lambda_0>0$
\begin{align*}
\lim_{\delta \to 0} \sup_{\lambda \in [\lambda_0,\lambda_c]} |v'_1(\lambda) - \bar{v}'_1(\lambda)| = 0 .
\end{align*}
Now suppose there are environment measures compatible with our a priori bound $\delta\leq \tfrac{1}{2}$ such that the speed is not monotone on $[0,\infty)$. If none of these measures satisfies the uniform ellipticity assumption with some smaller $\delta'>0$, we may just choose such an ellipticity constant to exclude these measures. If there exists a sequence $P^{(n)}$ of environment measures with ellipticity constants $\delta_n\to 0$ and such that the speed is not monotone, then we may find a sequence of $\lambda_n>0$ with $|v_1'(\lambda_n)-\bar v_1'(\lambda_n)|\geq \varepsilon_0$. By the bound \eqref{easybound}, we have necessarily $\lambda_n\to 0$. 
To complete the proof we show that such a sequence cannot exist. 

\begin{lem}\label{lem:deltato0} 
For any sequence of environment measures $P^{(n)}$ with ellipticity constants $\delta_n\to 0$ and any sequence $\lambda_n$ with $\lambda_n\to 0$,
\begin{align*}
\lim_{n\to \infty} \mathbb{E}^{(n)}_{\lambda_n} [\lambda_n^2(\xi_{\tau_2}-\xi_{\tau_1})^2] = 0 .
\end{align*}
\end{lem}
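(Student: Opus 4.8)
The plan is to exploit the fact that $\xi_{\tau_2}-\xi_{\tau_1}$ is a sum of at most $\tau_2-\tau_1$ terms, each of size at most $C\delta_n$ by the uniform bound \eqref{couplingerror2}, so that $|\xi_{\tau_2}-\xi_{\tau_1}| \leq C\delta_n (\tau_2-\tau_1)$ pointwise. Thus
\begin{align*}
\mathbb{E}^{(n)}_{\lambda_n}[\lambda_n^2(\xi_{\tau_2}-\xi_{\tau_1})^2] \leq C\delta_n^2\, \mathbb{E}^{(n)}_{\lambda_n}[\lambda_n^2(\tau_2-\tau_1)^2].
\end{align*}
Since $\delta_n\leq \tfrac12$ and $\lambda_n\to 0$, Lemma \ref{lem:regen} (with the remark that, under $\delta\leq\tfrac12$, the constants $c,C$ there depend only on the dimension) gives $\mathbb{E}^{(n)}_{\lambda_n}[\exp(c\lambda_n^2(\tau_2-\tau_1))]\leq C$ with $c,C$ uniform in $n$. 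Expanding the exponential, $\lambda_n^2(\tau_2-\tau_1)\leq \tfrac{1}{c}\exp(c\lambda_n^2(\tau_2-\tau_1))$ and even $(\lambda_n^2(\tau_2-\tau_1))^2\leq \tfrac{2}{c^2}\exp(c\lambda_n^2(\tau_2-\tau_1))$, so $\mathbb{E}^{(n)}_{\lambda_n}[\lambda_n^4(\tau_2-\tau_1)^2]\leq C$ uniformly in $n$. Hence $\mathbb{E}^{(n)}_{\lambda_n}[\lambda_n^2(\tau_2-\tau_1)^2]=\lambda_n^{-2}\cdot\mathbb{E}^{(n)}_{\lambda_n}[\lambda_n^4(\tau_2-\tau_1)^2]\leq C\lambda_n^{-2}$, which only gives $C\delta_n^2/\lambda_n^2$ — precisely the bound \eqref{easybound} that blows up as $\lambda_n\to 0$. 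So the crude pointwise bound is not enough; the real point of the lemma is to do better when $\lambda_n\to 0$.

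The improvement I would pursue is to gain an extra factor that vanishes as $\lambda_n\to 0$ by re-examining the size of each summand $\big(d_{\omega,\lambda_n}(X_k)-d_{\bar\omega,\lambda_n}(X_k)\big)\cdot e_1$ more carefully. When $\lambda=0$ the walk is reversible and, crucially, $d_{\omega,0}(x)\cdot e_1$ has mean zero under the static environment measure (by the divergence structure / symmetry of the conductances), while $d_{\bar\omega,0}(x)\cdot e_1 = 0$ identically; a Taylor expansion in $\lambda$ then shows $\big(d_{\omega,\lambda}(x)-d_{\bar\omega,\lambda}(x)\big)\cdot e_1 = \lambda\, a_\omega(x) + O(\lambda^2)$ for a bounded, mean-zero (in a suitable sense) random field $a_\omega$, with the $O$ uniform in $\delta\leq\tfrac12$. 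This would give the bound $|\xi_{\tau_2}-\xi_{\tau_1}|\leq C\lambda_n\delta_n(\tau_2-\tau_1)$ plus mean-zero cancellation from the $a_\omega$ part along the path; combined with the moment bounds of Lemma \ref{lem:regen}, the deterministic part contributes $\lambda_n^2\cdot\lambda_n^2\delta_n^2\,\mathbb{E}[(\tau_2-\tau_1)^2]\leq C\lambda_n^2\delta_n^2\to 0$, and the fluctuating part, being a sum of $O(\tau_2-\tau_1)$ bounded mean-zero terms evaluated along the trajectory, contributes at most $C\delta_n^2$ after multiplying by $\lambda_n^2$ and using that $\mathbb{E}[\lambda_n^2(\tau_2-\tau_1)]$ is bounded above (again Lemma \ref{lem:regen}). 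Either way the product with $\lambda_n^2$ tends to $0$.

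The main obstacle I anticipate is making the cancellation in the fluctuating part rigorous: one cannot simply say "a sum of mean-zero terms along a path is small", because the walk's trajectory is correlated with the environment it explores. The clean way around this is to note that along the path between regeneration times, the increments and the local environments form a 1-dependent stationary sequence (Lemma \ref{lem:regen}), and to bound $\mathbb{E}^{(n)}_{\lambda_n}[(\xi_{\tau_2}-\xi_{\tau_1})^2]$ by conditioning on the regeneration structure and using that the number of summands has uniformly bounded exponential moments at scale $\lambda_n^2$. In fact, since the final target is only $\lim_n \mathbb{E}^{(n)}_{\lambda_n}[\lambda_n^2(\xi_{\tau_2}-\xi_{\tau_1})^2]=0$ and not a rate, the simplest rigorous route may be a uniform integrability argument: show $\lambda_n^2(\xi_{\tau_2}-\xi_{\tau_1})^2\to 0$ in $\mathbb{P}^{(n)}_{\lambda_n}$-probability (immediate from $|\xi_{\tau_2}-\xi_{\tau_1}|\leq C\delta_n(\tau_2-\tau_1)$, $\delta_n\to0$, and $\lambda_n^2(\tau_2-\tau_1)$ being tight by Lemma \ref{lem:regen}), together with uniform integrability of $\lambda_n^2(\xi_{\tau_2}-\xi_{\tau_1})^2$, which follows from the uniform bound $\mathbb{E}^{(n)}_{\lambda_n}[(\lambda_n^2(\tau_2-\tau_1)^2)^{1+\epsilon}]\leq C$ coming from the exponential moment estimate. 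This sidesteps the need for a quantitative cancellation estimate entirely.
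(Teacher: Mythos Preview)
Your uniform--integrability route contains a scaling error that makes both halves fail. From \eqref{tauexpbounds} the quantity with uniformly bounded moments is $\lambda_n^{2}(\tau_2-\tau_1)$, not $\lambda_n(\tau_2-\tau_1)$; in particular $\tau_2-\tau_1$ is typically of order $\lambda_n^{-2}$. The crude bound $|\xi_{\tau_2}-\xi_{\tau_1}|\le C\delta_n(\tau_2-\tau_1)$ therefore only gives
\[
\lambda_n^{2}(\xi_{\tau_2}-\xi_{\tau_1})^{2}\ \le\ C\,(\delta_n/\lambda_n)^{2}\,\bigl(\lambda_n^{2}(\tau_2-\tau_1)\bigr)^{2}.
\]
The second factor is tight, but nothing in the hypotheses forces $\delta_n/\lambda_n\to 0$ (only $\delta_n\to 0$ and $\lambda_n\to 0$ separately). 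Thus convergence in probability does not follow, and the uniform moment you invoke is false as stated: already $\mathbb{E}^{(n)}_{\lambda_n}\bigl[\lambda_n^{2}(\tau_2-\tau_1)^{2}\bigr]=\lambda_n^{-2}\,\mathbb{E}^{(n)}_{\lambda_n}\bigl[(\lambda_n^{2}(\tau_2-\tau_1))^{2}\bigr]\to\infty$. This is exactly the obstruction \eqref{easybound} you diagnosed at the start; moving from $L^{1}$ to ``in probability plus UI'' does not evade it, because the pointwise bound is the only input you use. Your Taylor paragraph also slips: at $\lambda=0$ the difference $(d_{\omega,0}-d_{\bar\omega,0})\cdot e_1=d_{\omega,0}\cdot e_1$ is of size $O(\delta)$, not $O(\lambda)$; ``mean zero under the static law'' is not the same as ``zero'', so no factor of $\lambda$ appears.

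What is missing is a genuine cancellation estimate for the additive functional $\sum_{k<L} d_{\omega,0}(X_k)$ under the \emph{unbiased} law, showing its $L^{2}$ norm is $O(\sqrt{L\delta})$ rather than the trivial $O(L\delta)$. The paper gets this from a forward--backward martingale decomposition (Kesten's trick, Lemma~\ref{lem:kest}): under the reversible measure $Q\times P_{\omega,0}$ both $X_n-\sum d_\omega(X_k)$ and its time reversal are martingales, and summing them cancels the position to leave $-2\sum d_\omega(X_k)$ plus boundary terms, whence $\mathbb{E}_0\bigl[\sup_{n\le L}\|\sum_{k<n} d_\omega(X_k)\|^{2}\bigr]\le CL\delta$. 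With $L=N/\lambda_n^{2}$ this yields $\mathbb{E}_0\bigl[(\lambda_n\xi^{*}_{N/\lambda_n^{2}}(0))^{2}\bigr]\le CN\delta_n\to 0$. The transfer from $\mathbb{P}_0$ to $\mathbb{P}_{\lambda_n}$ is by a change of measure whose density is bounded in every $L^{p}$ uniformly (cf.\ \eqref{mompbound}), and the random time $\tau_i$ is handled by slicing on $\{(N-1)/\lambda_n^{2}\le\tau_i<N/\lambda_n^{2}\}$ and using the exponential tails \eqref{tauexpbounds} together with the a~priori fourth moment bound \eqref{uniformmombound}. Your instinct to isolate the $\lambda=0$ contribution is right, but the cancellation step genuinely requires the reversibility of the unbiased walk and cannot be replaced by tightness of $\lambda_n^{2}(\tau_2-\tau_1)$.
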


\begin{proof}
To simplify notation, let us drop some of the indices $n$, in particular we write $\lambda$ for $\lambda_n$. We have for $i=1,2$
\begin{align*}
\mathbb{E}^{(n)}_\lambda [\lambda^2(\xi_{\tau_i})^2] & \leq \sum_{N=1}^\infty \mathbb{E}^{(n)}_\lambda\left[ \lambda^2 ( \xi_{N/\lambda^2}^*)^2 \mathbbm{1}_{\{ (N-1)/\lambda^2\leq \tau_i <N/\lambda^2 \} } \right] \\
& \leq \sum_{N=1}^\infty \mathbb{E}^{(n)}_\lambda\left[ \lambda^3 | \xi_{N/\lambda^2}^*|^3 \right]^{2/3} \mathbb{P}^{(n)}_\lambda (\tau_i \geq N/\lambda^2)^{1/3} ,
\end{align*}
with 
\begin{align*}
\xi_{N/\lambda^2}^* = \max_{0\leq k\leq N/\lambda^2} \xi_{k}  .
\end{align*}
By the moment bound for $\tau_i$, 
\begin{align*}
\mathbb{E}^{(n)}_\lambda [\lambda^2(\xi_{\tau_i})^2]  \leq C \sum_{N=1}^\infty \mathbb{E}^{(n)}_\lambda\left[ \lambda^3 | \xi_{N/\lambda^2}^*|^3 \right]^{2/3} e^{-cN } .
\end{align*}
Using the decomposition of $\xi_k=(M_k-\bar M_k)\cdot e_1 + (X_k-\bar X_k)\cdot e_1$ into a martingale term with bounded increments and the process $X_k$, Doob's inequality and the bound in Lemma \ref{lem:mombound} implies
\begin{align} \label{uniformmombound}
 \mathbb{E}^{(n)}_\lambda\left[ \lambda^4 |\xi_{N/\lambda^2}^*|^4 \right] \leq C N^4 , 
\end{align}
such that the assertion of the lemma will follow once we show that for every $N$,
\begin{align*}
\lim_{n \to \infty} \mathbb{E}^{(n)}_\lambda\left[ \lambda^3 |\xi_{N/\lambda^2}^*|^3 \right] = 0 .
\end{align*}  
We write the expectation with respect to the unbiased measure,
\begin{align*}
\mathbb{E}^{(n)}_\lambda\left[ \lambda^3 |\xi_{N/\lambda^2}^* |^3 \right] = \mathbb{E}^{(n)}_0\left[ \lambda^3 |\xi_{N/\lambda^2}^* |^3 G(\omega^{(n)},\lambda,N/\lambda^2)\right]
\end{align*}
with 
\begin{align*}
G(\omega,\lambda,m) = \frac{dP_{\omega,\lambda}}{dP_{\omega,0}}(X_k; 0\leq k\leq m),
\end{align*}
and $\omega^{(n)}$ distributed according to $P^{(n)}$.
We know that 
\begin{align*}
G(\omega^{(n)},\lambda,N/\lambda^2) = \exp \left( \lambda M_{N/\lambda^2}\cdot e_1 - \tfrac{1}{2} \mathbb{E}^{(n)}_0[(\lambda M_{N/\lambda^2}\cdot e_1)^2] + o(\lambda)\right)
\end{align*}
with an error term uniformly in $\delta$. Since $\delta$ and the distribution of $\omega^{(n)}$ is now varying with $\lambda$, $M_{N/\lambda^2}$ is now a triangular array of martingales. Thanks to the fact that all increments are uniformly (in $\delta$ and $\lambda$) bounded, the CLT for arrays of martingales yields
\begin{align*}
G(\omega^{(n)},\lambda,N/\lambda^2) \xrightarrow[n \to \infty]{d} e^{\widehat M_N\cdot e_1-\tfrac{1}{2}E[(\widehat M_N\cdot e_1)^2]}
\end{align*}
with $\widehat M_N$ a Gaussian random variable. Again, this convergence is complemented by a good moment bound, see \eqref{mompbound},
\begin{align*}
\mathbb{E}^{(n)}_0[G(\omega^{(n)},\lambda,N/\lambda^2)^p] \leq e^{p^2\tfrac{N}{2}+1} 
\end{align*}
for all $n$ and $p\geq 1$. Therefore, it suffices to show
\begin{align}\label{deltalambdalimit}
\lambda \xi_{N/\lambda^2}^* \xrightarrow[n \to \infty]{} 0
\end{align}
in probability. Until now we tacitly ignored that $d_{\omega,\lambda}(x)$ in the definition of $\xi_n=\xi_n(\lambda)$ depends on $\lambda$, but by the bound
\begin{align*}
||(d_{\omega,\lambda}(x)- d_{\bar\omega,\lambda}(x)) -(d_{\omega,0}(x)-d_{\bar\omega,0}(x))|| \leq C\delta \lambda
\end{align*}
we have
\begin{align*}
\lambda|\xi_{N/\lambda^2}^*(\lambda)-\xi_{N/\lambda^2}^*(0)| \leq CN\delta ,
\end{align*}

Therefore, it suffices to show that $\lambda\xi_{N/\lambda^2}^*(0)$ goes in probability to zero as $n$ goes to infinity. Recall that since for $\lambda=0$ the local drift in the environment $\bar\omega$ is zero, i.e.
$d_{\bar\omega,0}(x) = 0, \forall x$,  we get in fact
\[
\xi_n(0) = \sum_{k=0}^{n-1} d_{\omega,0}(X_{k})\cdot e_1 
\]
Lemma \ref{lem:kest} (with $L$ of that lemma set to be $N/\lambda^2$) below shows that
\[
\mathbb{E}_0^{(n)}\left[\left.\xi_{N/\lambda^2}^*(0)\right.^2
\right]
\leq CN\lambda^{-2}\delta,
\]
so
\[
\mathbb{E}_0^{(n)}\left[\left(\lambda\xi_{N/\lambda^2}^*(0)\right)^2
\right]
\leq CN\delta
\]
which goes to zero as $n$ goes to infinity and then $\delta=\delta^{(n)}\to 0$.
\end{proof}

The next lemma is now all that is missing. The lemma is an adaptation of Lemma 2.4 in \cite{klo2012fluctuations}, which itself is based on the main idea of Proposition 3.3 of \cite{kesten}, to our setting.

\begin{lem}\label{lem:kest}
There exists a constant $C>0$ depending only on the dimension, such that for all $L\geq 1$ and $\delta\leq \tfrac{1}{2}$, we have, with $d_\omega(\cdot) = d_{\omega, 0}(\cdot)$,
\begin{align*}
\mathbb{E}_0 \left[ \sup_{0\leq n\leq L} \left|\left|\sum_{k=0}^{n-1} d_\omega(X_{k})\right|\right|^2 \right] \leq C L \delta .
\end{align*}  
\end{lem}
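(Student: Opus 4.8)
The quantity $\sum_{k=0}^{n-1} d_\omega(X_k)$ is, under the unbiased walk, almost a martingale: since $d_\omega(x) = E^x_{\omega,0}[X_1-X_0]$, the compensated process $X_n - \sum_{k=0}^{n-1} d_\omega(X_k)$ is a martingale, so $S_n := \sum_{k=0}^{n-1} d_\omega(X_k) = X_n - M_n$ where $M_n$ is a martingale with increments bounded by $2$ in each coordinate. Thus $\mathbb{E}_0[\max_{n\le L}\|M_n\|^2]\le C L$ by Doob, which is too weak — we need the gain of a factor $\delta$, and it must come from a genuine cancellation, not from the trivial bound $\|d_\omega(x)\|\le C\delta$ (which would only give $CL^2\delta^2$). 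The plan is to follow the Kesten–Kipnis–Varadhan style argument used in \cite{kesten} and \cite{klo2012fluctuations}: solve (approximately) the Poisson equation for the generator $\mathcal{L}_0$ of the environment-seen-from-the-particle process and write $d_\omega(X_k)$ as a martingale increment plus a telescoping term.

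Concretely, I would let $\mathcal{L}_0$ denote the generator of the environment process $\bar\omega_k = \theta_{X_k}\omega$ under $P_{\omega,0}$ averaged over $P$, which is self-adjoint and negative on $L^2(P)$ because the unbiased walk is reversible with respect to a constant (up to the conductance normalization) measure. The local drift $d_\omega(\cdot)$, viewed as a function $\Phi$ on environments, has mean zero under $P$ (by the iid, symmetric-in-law structure of the conductances: $E[d_\omega(0)] = 0$). For $\varepsilon>0$ solve $(\varepsilon - \mathcal{L}_0)u_\varepsilon = \Phi$, so that $u_\varepsilon = (\varepsilon-\mathcal{L}_0)^{-1}\Phi$, and decompose
\begin{align*}
\sum_{k=0}^{n-1}\Phi(\bar\omega_k) = \varepsilon\sum_{k=0}^{n-1} u_\varepsilon(\bar\omega_k) + \big(u_\varepsilon(\bar\omega_0)-u_\varepsilon(\bar\omega_n)\big) + \mathcal{M}_n^\varepsilon,
\end{align*}
where $\mathcal{M}_n^\varepsilon$ is an $L^2$ martingale with $\mathbb{E}_0[(\mathcal{M}_n^\varepsilon)^2] = n\, \mathbb{E}_0[(\mathcal{M}_1^\varepsilon)^2]$ and the per-step variance is controlled by $\langle u_\varepsilon, (-\mathcal{L}_0) u_\varepsilon\rangle_{L^2(P)} = \|(-\mathcal{L}_0)^{1/2}u_\varepsilon\|^2$. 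The key spectral estimate is that $\langle \Phi,(\varepsilon-\mathcal{L}_0)^{-1}\Phi\rangle$ and $\|(-\mathcal{L}_0)^{1/2}(\varepsilon-\mathcal{L}_0)^{-1}\Phi\|^2$ are both bounded by $\langle\Phi,(-\mathcal{L}_0)^{-1}\Phi\rangle$ uniformly in $\varepsilon$, and this last quantity is $O(\delta)$: indeed $\Phi = O(\delta)$ pointwise \emph{and} $\Phi$ lies in the range of $(-\mathcal{L}_0)^{1/2}$ with $\|(-\mathcal{L}_0)^{-1/2}\Phi\|^2 = \langle \Phi, (-\mathcal L_0)^{-1}\Phi\rangle$; writing $\Phi$ as a (discrete) gradient — $d_\omega(0)\cdot e_1$ is a difference of nearest-neighbor jump rates divided by their sum, hence expressible through the Dirichlet form — gives $\langle\Phi,(-\mathcal L_0)^{-1}\Phi\rangle\le C\delta$. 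Choosing $\varepsilon = 1/L$ and using Doob's maximal inequality on $\mathcal{M}_n^\varepsilon$, together with the trivial bound $\varepsilon^2\,\mathbb{E}_0[(\sum_{k<n}u_\varepsilon(\bar\omega_k))^2]\le \varepsilon^2 n^2 \|u_\varepsilon\|^2 \le \varepsilon n^2 \langle\Phi,(-\mathcal L_0)^{-1}\Phi\rangle \le CL\delta$ and $\mathbb{E}_0[u_\varepsilon(\bar\omega_0)^2]\le \varepsilon^{-1}\langle\Phi,(-\mathcal L_0)^{-1}\Phi\rangle\le CL\delta$, yields $\mathbb{E}_0[\sup_{n\le L}\|S_n\|^2]\le CL\delta$, which is exactly the claim.

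The main obstacle is establishing the bound $\langle \Phi, (-\mathcal{L}_0)^{-1}\Phi\rangle \le C\delta$ with the \emph{right} power of $\delta$ and with the constant depending only on dimension — a naive use of $\|\Phi\|_\infty^2 \le C\delta^2$ together with $\|(-\mathcal{L}_0)^{-1}\| = \infty$ gives nothing, so one genuinely needs to exhibit $\Phi$ as a gradient and bound the corresponding $H_{-1}$ norm by the Dirichlet form, where the $O(\delta)$ comes from the oscillation of the conductances around their common value; this is precisely the content transported from Proposition~3.3 of \cite{kesten} and Lemma~2.4 of \cite{klo2012fluctuations}, and the work is in checking that their hypotheses (uniform ellipticity, iid structure, reversibility of the $\lambda=0$ walk) hold here and that all constants are dimension-only once $\delta\le\frac12$. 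A secondary technical point is handling the time $L = N/\lambda^2$ being random-free but the environment law varying with $n$: since the spectral bounds are uniform over all environment laws satisfying \eqref{UE} with $\delta\le\frac12$, the estimate holds with a single constant, which is what the application in Lemma~\ref{lem:deltato0} requires.
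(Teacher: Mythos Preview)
Your resolvent/Kipnis--Varadhan plan is correct in outline, and with $\varepsilon=1/L$ it does yield the stated bound; the $H_{-1}$ estimate $\langle\Phi,(-\mathcal L_0)^{-1}\Phi\rangle\le C\delta$ that you flag as the obstacle indeed follows by writing $\Phi_i(\omega)=p_{\omega,0}(0,e_i)-p_{\omega,0}(0,-e_i)$, using reversibility of the environment chain under $Q$ to get $\langle\Phi_i,g\rangle=-E_Q[p_{\omega,0}(0,e_i)(g(\theta_{e_i}\omega)-g(\omega))]$, then subtracting the constant $\tfrac{1}{2d}$ (which integrates to zero by translation invariance of $Q$) so that only the $O(\delta)$ remainder multiplies the gradient, and finally applying Cauchy--Schwarz against the Dirichlet form. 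This step is only sketched in your write-up; the rest is routine.

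The paper, however, takes a more elementary and direct route that avoids the resolvent entirely. It couples $X$ with the homogeneous walk $\bar X$ and works under the stationary measure $Q$ (comparable to $P$ with dimension-only constants when $\delta\le\tfrac12$). The key observation is that $M_n=(X_n-\bar X_n)-\sum_{k<n}d_\omega(X_k)$ is a forward martingale, and by \emph{time reversal} (reversibility under $Q$) the analogous backward process $M_n^-$ is also a martingale; adding them gives
\[
M_n+M_L^--M_{L-n}^-=-2\sum_{k=0}^{n-1}d_\omega(X_k)+d_\omega(X_0)-d_\omega(X_n),
\]
so the additive functional is expressed as a sum of two martingales plus bounded boundary terms, and Doob's inequality finishes. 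The factor $\delta$ appears transparently because each martingale increment $(X_k-X_{k-1})-(\bar X_k-\bar X_{k-1})-d_\omega(X_{k-1})$ is bounded but has second moment $O(\delta)$ (the difference of steps is nonzero with probability $O(\delta)$ by the coupling, and $\|d_\omega\|\le C\delta$). This forward--backward (Lyons--Zheng) decomposition is what the paper means by ``the main idea of Proposition~3.3 of \cite{kesten}'', and it buys you the result without ever touching the spectrum of $\mathcal L_0$ or the $H_{-1}$ norm; your approach recovers the same conclusion via spectral calculus, trading the time-reversal identity for the variational characterization of $(-\mathcal L_0)^{-1}$.
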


\textbf{Proof:} Recall that the environment measure $Q$ with
\begin{align*}
\frac{dQ}{dP}(\omega) = Z^{-1} \sum_{|e|=1} \omega(0,e)
\end{align*}
is stationary, reversible and ergodic for the process $(\widehat\omega_n)_n$ of the environment seen from the particle (see \cite{klo2012fluctuations} and \cite{einstein} for the definition of $(\widehat\omega_n)_n$ and some properties).  If $\delta \leq \tfrac{1}{2}$, the density satisfies $c\leq |\frac{dQ}{dP}(\omega)|\leq C$ with positive constants $c,C$ depending only on the dimension.
Therefore we may consider expectation with respect to $Q\times P_\omega$, which we denote by $\mathbb{E}_Q$. Under this measure, 
\begin{align*}
M_n = (X_n-\bar{X}_n) - (X_0-\bar{X}_0) - \sum_{k=0}^{n-1} d_{\omega}(X_k)
\end{align*}  
is a martingale with respect to the filtration $\mathcal{F}_n = \sigma(\{\widehat\omega_0,\ldots, \widehat\omega_n \})$. Since by time reversal, for any $n\ge 1$, the sequence 
\begin{align*}
\big( (X_1-X_0)-(\bar{X}_1-\bar{X}_0), \dots ,(X_n-X_{n-1})-(\bar{X}_n-\bar{X}_{n-1}), \widehat\omega_0,\ldots, \widehat\omega_n \big)
\end{align*}
has the same distribution as 
\begin{align*}
\big( (\bar{X}_n-\bar{X}_{n-1})- (X_n-X_{n-1}), \dots ,(\bar{X}_1-\bar{X}_{0})- (X_1-X_{0}), \widehat\omega_n,\ldots, \widehat\omega_0 \big)
\end{align*}
under  $Q_0\times P_\omega$, we have that 
\begin{align*}
M^-_n = (X_{L-n}-\bar{X}_{L-n}) - (X_L-\bar{X}_L) - \sum_{k=0}^{n-1} d_{\omega}(X_{L-k})
\end{align*}
is a martingale with respect to the filtration $\mathcal{F}_n^- = \sigma(\{\widehat\omega_L,\ldots, \widehat\omega_{L-n} \})$.
Noting that
\begin{align*}
M_L^- - M_{L-n}^- = (X_0-\bar{X}_0) - (X_{n}-\bar{X}_{n}) - \sum_{k=1}^n d_\omega(X_k) ,
\end{align*}
we get 
\begin{align*}
M_n+M_L^- - M_{L-n}^- = -2\sum_{k=0}^{n-1} d_\omega(X_k) + d_\omega(X_0)-d_\omega(X_n) .
\end{align*}
Therefore,
\begin{align*}
\mathbb{E}_Q \left[ \sup_{0\leq n\leq L} \left|\left|\sum_{k=0}^{n-1} d_\omega(X_{k})\right|\right|^2 \right]
\leq
\frac{1}{4} \mathbb{E}_Q \left[ \sup_{0\leq n\leq L} ||M_n+M_L^- - M_{L-n}^- - d_\omega(X_0)+d_\omega(X_n)||^2\right] .
\end{align*}
The lemma follows then from Doob's inequality, since $|d_\omega(x)|\leq C\delta $ and 
\begin{align*}
\mathbb{E}_Q \left[||M_L||^2\right] + \mathbb{E}_Q \left[||M_L^-||^2\right] \leq CL\delta .
\end{align*}

\qed

\subsection{Proof of Theorem \ref{thm:nonmonotone}}

The proof follows the arguments of \cite{BGP2003speed}, where the speed of biased random walk on a percolation cluster is studied. Note that the environment measure with 
\begin{align*}
P(\omega(0,e)=1)=p=1-P(\omega(0,e)=\kappa)
\end{align*}
generates a percolation graph consisting of the edges with conductance 1, connected by $\kappa$-edges. So if $p>\tfrac{1}{2}$ and $\kappa$ small enough, we would expect the random walk to behave like the random walk on the percolation cluster for most times, with short excursions along $\kappa$-edges. In analogy with the percolation case, we say in this section that an edge $\{x,y\}$ is open if $\omega(x,y)=1$ and (infinite) cluster will mean the (infinite) cluster connected by open edges.

We choose a bias $\lambda_1$, such that the random walk on the percolation cluster has a positive speed and show
\begin{align} \label{nonmonotonicitylowerbound}
v_1(\lambda_1)\geq c_0 , 
\end{align}
for a positive $c_0$ independent of $\kappa$. On the other hand, for a larger bias $\lambda_2$, chosen such that the random walk on the percolation cluster has zero speed, we show 
\begin{align} \label{nonmonotonicityupperbound}
v_1(\lambda_2)\leq c_0/2,
\end{align}
for $\kappa$ sufficiently small. The combination of these two bounds yield the statement of Theorem \ref{thm:nonmonotone}.

\subsubsection{A lower bound for $v_1(\lambda_1)$}

Denote the infinite cluster connected by open edges by $I$.

\begin{defi}\label{def:goodpoint}
A point $x\in \mathbb{Z}^2$ is good, if there exists an infinite path $x=x_0,x_1,x_2,\dots$ such that for all $k\geq 1$
\begin{itemize}
\item[(i)] $|(x_k-x_{k-1})\cdot e_2|=1$ and $(x_k-x_{k-1})\cdot e_1 =1$, 
\item[(ii)] the edges $\{x_{k-1},x_{k-1}+e_1\},\{x_{k-1}+e_1,x_k\}$ are open.
\end{itemize} 
\end{defi}

Let $J$ be the set of good vertices. We say a vertex $x$ is bad, if $x\in I$ and $x$ is not good. Connected components of $I\setminus J$ are called traps. For a vertex $x$, let $T(x)$ be the trap containing $x$ (being empty if $x$ is good). The length of the trap of $x$ is
\begin{equation*}
L(x)=\sup\{(y-z)\cdot e_1:\, y,z\in T(x) \}
\end{equation*}
and the width is
\begin{equation*}
W(x)=\sup\{(y-z)\cdot e_2:\, y,z\in T(x) \} .
\end{equation*}
If $T(x)$ is empty, then we take $L(x)=W(x)=0$. The following estimate is Lemma 1 in \cite{BGP2003speed}.

\begin{lem}\label{lem:trapsize}
For every $p\in (\tfrac{1}{2}, 1)$ there exists $\alpha=\alpha(p)$ such that
$P(L(0)\geq n)\leq \alpha^n$ and $P(W(0)\geq n)\leq \alpha^n$ for
every $n$. Further, $\lim_{p\to 1}\alpha(p) = 0$. 
\end{lem}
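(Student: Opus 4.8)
This estimate is \cite[Lemma 1]{BGP2003speed}, so in the write-up I would simply cite it; for completeness, here is the argument I would reconstruct. First I would reduce both inequalities to a single bound on the size of the trap: if $T(0)$ has $e_1$-extent (or $e_2$-extent) at least $n$, pick $y,z\in T(0)$ realizing it and join them by a path inside the connected set $T(0)$; that path has at least $n$ edges, so $|T(0)|\ge n$. Thus it suffices to show $\mathbb P(|T(0)|\ge n)\le\beta(p)^n$ with $\beta(p)\to0$ as $p\to1$, and then, since also $\mathbb P(0\text{ is bad})\to0$ as $p\to1$, routine bookkeeping produces a common $\alpha(p)\to0$ in the two stated estimates.

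Next I would reformulate goodness as survival in an oriented percolation: on $\mathbb Z^2$ declare the oriented edge $x\to x+e_1+e_2$ (resp.\ $x\to x+e_1-e_2$) present iff both lattice edges $\{x,x+e_1\}$ and $\{x+e_1,x+e_1\pm e_2\}$ are open, so that by Definition \ref{def:goodpoint} a site is good iff it starts an infinite oriented open path. This is a finitely dependent oriented percolation (every lattice edge is used by at most two oriented edges), highly supercritical for $p$ close to $1$, and a site is bad iff its oriented cluster $C^{\mathrm{or}}(x)$ is finite. One has $C^{\mathrm{or}}(x)\subseteq T(x)$ — a site oriented-reachable from a bad site is again bad and open-connected to it — and hence $T(0)=\bigcup_{x\in T(0)}C^{\mathrm{or}}(x)$. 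In particular every $w\in T(0)$ with $w+e_1+e_2\notin T(0)$ and $w+e_1-e_2\notin T(0)$ must have both of its outgoing oriented edges absent, an event of probability $\le2(1-p)$ that depends only on the three lattice edges incident to $w$ and $w+e_1$.

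The core is a Peierls/contour estimate. I would argue that $|T(0)|\ge n$ forces the boundary of $T(0)$ to contain a contour of length of order $n$ along which order $n$ of these ``absent pair'' defects occur, on essentially disjoint triples of lattice edges; then, bounding the number of contours through a fixed neighbourhood of $0$ of length $\ell$ by $C^\ell$ and paying a factor $\le 2(1-p)$ per defect, a union bound gives $\mathbb P(|T(0)|\ge n)\le(C(1-p))^{cn}=:\beta(p)^n$, with $\beta(p)\to0$ as $p\to1$. This is the sponge-type estimate of \cite{BGP2003speed}, in the spirit of \cite{kesten}.

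The hard part is exactly what makes a naive union bound fail: goodness is an increasing event, so the events $\{x\text{ is bad}\}$ are positively correlated (FKG) and, worse, each depends on infinitely many edges, so one cannot multiply probabilities over a set of bad sites. Passing to the oriented-percolation picture is what fixes this, since the defects forcing a long contour are localized and finitely dependent; establishing the geometric implication ``$|T(0)|\ge n\Rightarrow$ a contour of length $\gtrsim n$ carrying $\gtrsim n$ localized defects'' is the one genuinely delicate step, and it is carried out in \cite{BGP2003speed}.
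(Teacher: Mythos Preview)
Your proposal matches the paper exactly: the paper does not give a proof of Lemma~\ref{lem:trapsize} but simply records that it is Lemma~1 of \cite{BGP2003speed}, which is precisely what you do in your first sentence. Your additional reconstruction is a reasonable sketch of the argument in \cite{BGP2003speed}; just note that the Peierls bound $(C(1-p))^{cn}$ as you have written it only yields $\alpha(p)<1$ for $p$ sufficiently close to~$1$, so covering the full range $p\in(\tfrac12,1)$ requires the sharper contour/duality argument carried out in the cited reference --- but since you explicitly defer the delicate step there, this is not a gap in your write-up.
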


Let $\mathcal{H}(n)$ be the $\sigma$-algebra generated by the history of the random walk until time $n$, i.e., 
${\mathcal H}(n) = \sigma(\{X_0 = 0, X_1, X_2, \ldots ,X_n\})$.  
Let $P_{\omega,\lambda}^{{\mathcal H}(n)}$ be the conditional 
distribution of
$P_{\omega,\lambda}$ given ${\mathcal H}(n)$, and $\mathbb{P}_{\lambda}^{{\mathcal H}(n)}$ be 
the conditional 
distribution of
$\mathbb{P}_{\lambda}$ given ${\mathcal H}(n)$.
Define $\tau_n(h)=\min\{i>n :\, X_i\cdot e_1=h\}$. The following estimate is essential in the proof of the lower bound. 

\begin{lem}\label{lem:preconduc}
There exists $D'=D'(\lambda)$ such that for
every $\ell\geq 1$ and for every configuration $\omega$ such that $x$ is
a good point, 
\begin{equation*}
P_{\omega,\lambda}^{{\mathcal H}(n)}(\tau_n(x\cdot e_1-\ell)\leq\tau_n(x\cdot e_1+\ell/3)|X_n=x )<
D'e^{-\lambda \ell/3}.
\end{equation*}
\end{lem}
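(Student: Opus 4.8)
The plan is to prove Lemma \ref{lem:preconduc} by a one-dimensional comparison argument: project the walk onto its $e_1$-coordinate and bound the probability of hitting the level $x\cdot e_1 - \ell$ before $x\cdot e_1+\ell/3$ by the corresponding escape probability for a biased nearest-neighbor walk on $\mathbb{Z}$. The key point is that, since $x$ is a good point, Definition \ref{def:goodpoint} furnishes an infinite ``ladder'' path emanating from $x$ along which the edges used are all open (conductance $1$), so whenever the walk sits on a vertex of this ladder it has a uniformly large probability, bounded below by some $c(\lambda)>0$, of taking the prescribed step that increases the $e_1$-coordinate by $1$. This gives a lower bound on the rightward drift that does not depend on $\kappa$ and holds as long as the walk stays on the good ladder.

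Concretely, first I would set up the auxiliary dominating process: let $S_i = X_i\cdot e_1 - x\cdot e_1$ for $i\ge n$, started at $0$ at time $n$. Using the uniform ellipticity-type lower bound \eqref{lowerbounde1} (or rather its analogue along open edges of the ladder), the increments of $S_i$ stochastically dominate the increments of a biased random walk $\widetilde S$ on $\mathbb{Z}$ which moves right with probability $\rho = \rho(\lambda)$ and left with probability $1-\rho$, where $\rho > 1/2$ for $\lambda>0$ because the open-edge drift is genuinely to the right. Second, I would invoke the classical gambler's ruin / hitting estimate for $\widetilde S$: the probability of reaching $-\ell$ before $+\ell/3$ is of order $\big(\tfrac{1-\rho}{\rho}\big)^{\ell/3}$ times a constant, which is exactly a bound of the form $D' e^{-\lambda\ell/3}$ after absorbing $\log\big(\tfrac{\rho}{1-\rho}\big) \ge c\lambda$ into the exponent (for bounded $\lambda$). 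Third, I would handle the geometric subtlety that the walk can leave the ladder: here one uses that whenever $X_i$ is at a good point it again has a fresh infinite good ladder, and a vertex reached by a rightward ladder step from a good point is itself on the (same) ladder; so the rightward-drift lower bound persists for as long as the $e_1$-coordinate stays $\geq x\cdot e_1 - \ell$, which is precisely the regime relevant to the event in question. The bound is therefore uniform in $\omega$ (given $x$ good) and in $\ell$, and $D'$ depends only on $\lambda$.

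The main obstacle I anticipate is making the coupling with the one-dimensional walk fully rigorous while the walk wanders off the explicit ladder path. The ladder from Definition \ref{def:goodpoint} only controls the step from ladder vertices; at an off-ladder vertex in $I$ one has no such control, and in principle the $e_1$-coordinate could drift left there. The resolution is to track the walk only through a well-chosen subsequence of times — say the successive times the walk is at a good point, or more simply to observe that the event $\{\tau_n(x\cdot e_1 - \ell) \le \tau_n(x\cdot e_1 + \ell/3)\}$ forces the $e_1$-coordinate to decrease by $\ell$ before increasing by $\ell/3$, and to bound this by conditioning on the history and repeatedly applying the one-step lower bound each time the walk is at the rightmost $e_1$-level it has reached so far (which, for a good starting point, is always a good point lying on the ladder). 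This "record-level" device converts the genuinely two-dimensional problem into the exit problem for a one-dimensional submartingale-type process with uniformly positive rightward bias, from which the stated estimate follows by a standard exponential supermartingale (Azuma/optional-stopping) argument, with $D'$ and the rate $\lambda/3$ read off from the computation.
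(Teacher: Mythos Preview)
Your approach has a genuine gap. The one-dimensional comparison you propose requires a uniform lower bound on the probability of a rightward $e_1$-step, but this bound is available only at vertices on the good ladder of Definition~\ref{def:goodpoint}. At an off-ladder vertex the edge in direction $+e_1$ may carry conductance $\kappa$ while the edge in direction $-e_1$ carries conductance $1$, so the local $e_1$-drift can be arbitrarily strongly negative (the ratio $p_{\omega,\lambda}(x,e_1)/p_{\omega,\lambda}(x,-e_1)=\kappa e^{2\lambda}$ can be made as small as one likes). The walk will step off the ladder with probability bounded away from zero at the very first step, and your ``record-level'' repair does not work: a new $e_1$-record need not be achieved at a good point, nor at a ladder vertex --- the walk can reach level $x\cdot e_1 + m$ for the first time at any vertex in $\mathcal H_{x\cdot e_1+m}$. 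Thus there is no subsequence of times along which the increments dominate those of a rightward-biased one-dimensional walk, and the exponential estimate cannot be extracted this way with a constant $D'$ independent of $\kappa$.

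The paper's proof avoids trajectory-wise control entirely by exploiting reversibility through electrical network theory. One encloses $x$ in a box $B=x+[-\ell,\ell/3]\times[-e^{\lambda\ell},e^{\lambda\ell}]$ and bounds the exit probability by the ratio $C_{x,\partial B\setminus B^+}/C_{x,B^+}$ of effective conductances. The good ladder, consisting of open edges, yields a $\kappa$-free lower bound $C_{x,B^+}\ge D_1 e^{2\lambda x\cdot e_1}$. Upper bounds on the conductance to the left face and to the horizontal faces follow from simple cut-set estimates (summing edge weights across a cut, and Nash-Williams for the top and bottom), and these also do not depend on $\kappa$ since one uses $\omega(x,y)\le 1$. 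The ratio then gives $D'e^{-\lambda\ell/3}$. This is the point where the choice of box height $e^{\lambda\ell}$ is essential --- it balances the contributions of the left face and the horizontal faces --- and it is precisely the kind of two-dimensional geometric input that a one-dimensional projection cannot see.
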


\textbf{Proof:} Consider the box $B = x + [-\ell , \ell/3]\times [ - e^{\lambda \ell} ,  e^{\lambda \ell}  ]$ with right face $B^+ = x+ \{\ell/3\}\times [ - e^{\lambda \ell} ,  e^{\lambda \ell}  ]$. From the general theory of electrical networks, see \cite{doylesnell} or \cite{LP:book}, we have the inequality
\begin{align*}
P_{\omega,\lambda}^{{\mathcal H}(n)}(\tau_n(x\cdot e_1-\ell)\leq\tau_n(x\cdot e_1+\ell/3)|X_n=x ) \leq \frac{C_{x,\partial B\setminus B^+}}{C_{x,B^+}} , 
\end{align*}
where $C_{x,A}$ denoted the effective conductance between a point $x$ and a set $A$ (see also Fact 2 in \cite{BGP2003speed}). The conductance $C_{x,B^+}$ is bounded from below by the conductance of a good path from $x$ to $B^+$, which is at least $D_1e^{\lambda 2 x\cdot e_1} $ for some $D_1=D_1(\lambda)$. Furthermore, we have the upper bound
\begin{align*}
C_{x,\partial B\setminus B^+} \leq C_{x,\partial B^-}+C_{x,\partial B_1}+C_{x,\partial B_2} , 
\end{align*}
where 
\begin{align*}
B^- & = x + \{-\ell \}\times [ - e^{\lambda \ell} ,  e^{\lambda \ell}  ] , \\
B_1 & = x + [-\ell , \ell/3]\times \{ - e^{\lambda \ell} \} , \\
B_2 & = x + [-\ell , \ell/3]\times \{ e^{\lambda \ell}  \} . 
\end{align*}
The effective conductance $C_{x,\partial B^-}$ is bounded from above by the sum of the edge weights between $z$ and $z+e_1$, for $z\in B^-$. But for every such $z$, the weight is 
\begin{align*}
\omega(z,z+e_1)e^{\lambda(2z\cdot e_1+1)} \leq e^{\lambda (2x\cdot e_1 -2\ell+1)} .
\end{align*}
There are at most $2e^{\lambda \ell} +1$ such edges. Therefore $ C_{x,\partial B^-}\leq D_2 e^{ \lambda(2x\cdot e_1 -\ell)}$ for some $D_2=D_2(\lambda)$. Finally, the Nash-Williams inequality gives
\begin{align*}
C_{x,B_i} \leq e^{-\lambda \ell} \sum_{i={x\cdot e_1 -\ell}}^{x\cdot e_1+\ell/3} e^{2\lambda (i+1)}
\leq D_3  e^{\lambda (2x\cdot e_1 -\ell/3)}
\end{align*}
for some $D_3=D_3(\lambda)$. Combining the bounds for the effective conductances, we get the desired bound for the exit probability. 
\hfill $\Box$

Let $G(x)$ be the event that $x$ is a good point. We call a time point $n$ a fresh epoch, if $(X_n-X_k)\cdot e_1>0$ for all $k<n$ and let $F(n)$ be the event that $n$ is a fresh epoch. From the bound in Lemma \ref{lem:preconduc}, we get the following inequalities (Lemma 3 and Lemma 4 in \cite{BGP2003speed}). In the following, take $p$ so close to 1 that $\alpha(p)$ in Lemma \ref{lem:trapsize} is less than 1.
Then there exists a constant $D=D(\lambda,p)$ such that 
\begin{align}\label{probbackstepping}
\mathbb{P}_{\lambda}^{{\mathcal H}(n)} ( \text{ there is an } m\geq n \text{ such that } (X_m-X_n)\cdot e_1\leq -\ell |\, F(n),G(X_n) ) \leq D e^{-\lambda \sqrt{\ell } /D} , \,  \mathbb{P}_\lambda-a.s. 
\end{align}
Let $\tau_n'(h)$ be the first fresh epoch later than $n$, such that the random walk hits a good point whose first coordinate is larger or equal to $h$. Then, there exists a constant $K=K(\lambda,p)$ such that for any $\ell\geq 1$
\begin{align} \label{probbackstepping2}
\mathbb{P}_{\lambda}^{{\mathcal H}(n)} \left( \tau(X_n\cdot e_1-\ell) < \tau_n'( X_n\cdot e_1 + \ell/6) \left| \, G(X_n), \max_{0\leq i\leq n} (X_i-X_n)\cdot e_1 <\sqrt{\ell} \right. \right) \leq K e^{-\lambda \sqrt{\ell } /K} ,  
\end{align}
$\mathbb{P}_\lambda$-almost surely.  
In particular, 
\begin{align} \label{probbackstepping3}
\mathbb{P}_{\lambda}^{{\mathcal H}(n)} \left( \tau(X_n\cdot e_1-\ell) < \tau_n'( X_n\cdot e_1 + \ell/6) |\, F(n), G(X_n) \right) \leq K e^{-\lambda \sqrt{\ell } /K} ,  
\end{align}
$\mathbb{P}_\lambda$-almost surely. From these bounds, the following lower bound for the speed is proven. Note that the constant is independent of $\kappa$. 

\begin{lem} \label{lem:displacement}
For $\lambda$ sufficiently small, there exists a constant $C=C(p)$ such that
\begin{align*}
\mathbb{P}_{\lambda} (X_n\cdot e_1 < C n^{1/10}) \leq C n^{-2} .
\end{align*}
\end{lem}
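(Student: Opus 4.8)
\textbf{Proof plan for Lemma \ref{lem:displacement}.}

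The plan is to show that the walk makes macroscopic forward progress by tracking the sequence of fresh epochs at good points. First I would fix $\lambda = \lambda_1$ small enough that Lemma \ref{lem:trapsize} applies with $\alpha(p) < 1$ (and so that the estimates \eqref{probbackstepping}, \eqref{probbackstepping2}, \eqref{probbackstepping3} hold), and set up the renewal-type structure: starting from a fresh epoch $n$ at which $X_n$ is a good point, \eqref{probbackstepping3} says that the probability of ever backtracking by $\ell$ before reaching a fresh epoch at a good point with first coordinate advanced by $\ell/6$ is at most $Ke^{-\lambda\sqrt\ell/K}$. The key point is that good points are not too sparse: since $P(x \text{ good}) \geq c(p) > 0$ uniformly in $\kappa$ (this is a consequence of the definition of good points together with Lemma \ref{lem:trapsize}, as in \cite{BGP2003speed}), along any forward path of length of order $\ell$ one encounters a good point with overwhelming probability, so the events in \eqref{probbackstepping2}--\eqref{probbackstepping3} can be chained.

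The main steps, in order: (1) Define successive ``checkpoints'': let $n_0 = 0$ (assume $0 \in I$ is good, handling the general case by a bounded initial cost or by the stationarity coming from conditioning appropriately), and inductively let $n_{j+1}$ be the first fresh epoch after $n_j$ at which the walk is at a good point with first coordinate $\geq X_{n_j}\cdot e_1 + \ell/6$, where $\ell$ is a scale to be optimized. (2) Using \eqref{probbackstepping3} and the strong Markov property at fresh epochs, show that conditionally on $\mathcal{H}(n_j)$, the walk reaches the next checkpoint $n_{j+1}$ — i.e. the ``bad event'' $\{\tau(X_{n_j}\cdot e_1 - \ell) < \tau'_{n_j}(X_{n_j}\cdot e_1 + \ell/6)\}$ fails — except with probability $Ke^{-\lambda\sqrt\ell/K}$, and on the good event $X_m \cdot e_1 \geq X_{n_j}\cdot e_1 - \ell$ for all $m \geq n_j$; hence after $k$ successful checkpoints $X_{n_k}\cdot e_1 \geq k\ell/6$ while the walk never drops below $-\ell$. (3) Control the time: by uniform ellipticity and the trap-size bounds (Lemma \ref{lem:trapsize}), the number of steps to pass from one checkpoint to the next has a tail that is, say, stretched-exponential or at worst polynomially heavy, so that $n_k \leq C k (\log k)^{?}$ with high probability — this is where one pays for the traps. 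Choosing $\ell$ and $k$ as suitable powers of $n$ (e.g. $\ell$ a small power of $n$ so that $Ke^{-\lambda\sqrt\ell/K}$ beats the required $n^{-2}$, and $k$ so that $k\ell/6 \geq C n^{1/10}$ while $n_k \leq n$) and a union bound over the $\leq k$ checkpoint failures, one gets $\mathbb{P}_\lambda(X_n\cdot e_1 < Cn^{1/10}) \leq Cn^{-2}$; the exponent $1/10$ is generous slack so that the crude bounds suffice.

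The main obstacle I expect is step (3), the time control: unlike the percolation setting of \cite{BGP2003speed} where the walk on the cluster has positive speed outright, here one must argue that the $\kappa$-edges do not trap the walk for too long — but since we only need a very weak lower bound ($n^{1/10}$, not linear), a rough estimate on the time spent in each trap (using uniform ellipticity to get an exponential-in-trap-size bound on the exit time, combined with the geometric tail on trap sizes from Lemma \ref{lem:trapsize}) should suffice, and crucially the resulting constant is independent of $\kappa$ because the trap \emph{sizes} are governed only by the open-edge percolation and not by $\kappa$. A secondary technical point is the proper handling of the conditioning on $\mathcal{H}(n)$ and of the starting point not being good — this is dealt with exactly as in \cite{BGP2003speed}, by noting $0$ lies within bounded distance of a good point with probability bounded below, or by a suitable shift.
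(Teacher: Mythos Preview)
Your checkpoint scheme in steps (1)--(2), iterating \eqref{probbackstepping3} at fresh epochs where the walk sits at a good point, is correct and is exactly the skeleton of the argument in \cite{BGP2003speed}; the paper simply defers to that proof. The issue is your step (3), and specifically your justification of $\kappa$-independence.

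You write that you will use ``uniform ellipticity to get an exponential-in-trap-size bound on the exit time'' and that the constant is $\kappa$-independent ``because the trap sizes are governed only by the open-edge percolation''. This reasoning does not hold: any bound extracted from uniform ellipticity carries the ellipticity constant, which here is $\kappa$. Trap \emph{sizes} are indeed $\kappa$-free, but trap \emph{exit times} depend on transition probabilities. More fundamentally, you are implicitly treating the walk as if it lived on the open cluster $I$; in the present model it lives on all of $\mathbb{Z}^2$ and can step off $I$ into vertices whose incident edges all have conductance $\kappa$. Nothing in your sketch controls those excursions.

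The paper's fix is narrower and different in kind. In \cite{BGP2003speed} the time control does not come from exit-time estimates on traps; it comes from the Carne--Varopoulos bound \eqref{CVbound}, which is used to rule out large transverse displacements (same $e_1$-coordinate, Euclidean distance $\geq n^{6/10}$) between times $i<j\leq n$. That bound involves $\sqrt{\pi(y)/\pi(x)}$. On a genuine percolation cluster $\pi(x)\geq e^{\lambda(2x\cdot e_1-1)}$ because every cluster vertex borders a conductance-$1$ edge; here, if $x$ lies in a ``$\kappa$-island'' $J(x)$ (a connected set of vertices each surrounded only by $\kappa$-edges) one only gets $\pi(x)\gtrsim\kappa\, e^{2\lambda x\cdot e_1}$ and the bound degenerates as $\kappa\to 0$. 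The repair is to let the walk first exit $J(x)$ at some time $T_x$---the diameter of $J(x)$ has the same geometric tail as in Lemma~\ref{lem:trapsize}, so on an event $A_n$ of high probability all relevant islands have diameter $\leq \gamma\log n$---and then apply Carne--Varopoulos from the exit point $z\notin J(x)$, where $\pi(z)$ has the $\kappa$-free lower bound. Summing over the at most $(\gamma\log n)^2$ possible exit points and the at most $n$ possible exit times recovers, up to harmless polylogarithmic factors, the transverse-displacement estimate of \cite{BGP2003speed}, and the rest of that proof runs verbatim. You should replace your step (3) by this argument rather than an exit-time bound.
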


Let us highlight the only change necessary in the proof given in \cite{BGP2003speed}: Therein, the Carne-Varopoulos bound 
\begin{align} \label{CVbound}
P_{\omega,\lambda}^x(X_n=y) \leq 2 \sqrt{\frac{\pi(y)}{\pi(x)}} \exp \left( -\frac{d(x,y)^2}{2n} \right)
\end{align} 
is applied, with $\pi$ the reversible measure and $d(\cdot,\cdot)$ the graph distance. On the percolation cluster, it is easy to get a further upper bound, since in this case,
\begin{align*}
e^{\lambda (2x\cdot e_1-1)} \leq \pi(x)\leq 4 e^{\lambda (2x\cdot e_1+1)} , 
\end{align*} 
as every point $x$ in the cluster is the endpoint of an edge with conductance 1. Of course, the upper bound is still valid in our case, but the lower bound depends on $\kappa$ if $x$ is surrounded by only $\kappa$-edges. To get a lower bound independent of $\kappa$, let $J(x)$ be the connected component of points surrounded by $\kappa$-edges. If $J(x)$ is empty, we can proceed as in the percolation case. Otherwise, let 
\begin{align*}
T_x = \inf\{n\geq 0 : X_n\notin J(x)\}
\end{align*}
and
define for positive integers $d_n$ the events
\begin{align*}
A_n = \{ \operatorname{diam}(J(z))\leq d_n \text{ for all } z-x\in [-n,n]^2 \} , 
\end{align*} 
then by Lemma \ref{lem:trapsize}, 
\begin{align} \label{trapdiameter} 
P(A_n^c) \leq n^2 P(\operatorname{diam}(T(0))> d_n ) \leq 2n^2 \alpha^{d_n/2} .
\end{align}
For an environment $\omega \in A_n$ we have then for the hitting probability 
\begin{align} \label{kappatraps}
P_{\omega,\lambda}^x(X_n=y) 
& \leq \sum_{z\in \mathbb{Z}^2} \sum_{1\leq m\leq n} P_{\omega,\lambda}^z(X_{n-m}=y)P_{\omega,\lambda}^z(T_x=m, X_{T_x}=z) 
\end{align}
On $A_n$, there are at most $d_n^2$ points $z$ such that the second probability in the sum is nonzero, and for each such $z$ we have by the Carne-Varopoulos bound
\begin{align*}
P_{\omega,\lambda}^z(X_{n-m}=y) \leq 4e^{\lambda ((y-z)\cdot e_1+1)} \exp \left( -\frac{d(z,y)^2}{2(n-m)} \right)
\leq 4e^{\lambda ((x-y)\cdot e_1+d_n+1)} \exp \left( -\frac{(d(x,y)-d_n)^2}{2n} \right) .  
\end{align*}
Let $d_n=\gamma \log(n)$ for $\gamma =-8/\log(\alpha)$, then for all but finitely many $n$, $A_n$ occurs. For all $\omega\in A_n$ and $1\leq i<j\leq n$ we may conclude by the union bound
\begin{align*}
P_{\omega,\lambda}^x ( X_i\cdot e_1=X_j\cdot e_1 \text{ but } ||X_i-X_j||\geq n^{6/10}) & \leq 4 n^4 d_n^2 e^{\lambda (d_n+1)} \exp\left( -\frac{(n^{6/10}-d_n)^2}{2n} \right) \\ 
& \leq \exp\left( -\frac{1}{5}n^{1/10}\right)  
\end{align*}
for $n$ sufficiently large, 
which yields the necessary estimate in \cite{BGP2003speed}. 

\begin{lem} \label{lem:neverreturn}
There exists a constant $c=c(\lambda, p)>0$ such that
\begin{align*}
\mathbb{P}_\lambda (X_n\cdot e_1 \geq 1 \text{ for all } n\geq 1) >c .
\end{align*}
\end{lem}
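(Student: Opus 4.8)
I would prove this by forcing the walk, with a probability bounded below uniformly in $\kappa$, first to climb a ``good staircase'' (in the sense of Definition \ref{def:goodpoint}) until it sits at a good point several levels above the origin, and then to invoke the exponential escape estimates \eqref{probbackstepping2} and \eqref{probbackstepping} to keep it strictly above level $0$ for all later times. The point making everything $\kappa$-independent is that the only property of the conductances used is the \emph{upper} bound $\omega\le 1$ (legitimate because $\omega\in\{1,\kappa\}$ with $\kappa\le 1$), never the degenerate ellipticity lower bound, together with the fact that goodness and the constants $K,D$ in \eqref{probbackstepping2}, \eqref{probbackstepping} depend only on $\lambda$ and $p$.

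Fix $\ell=\ell(\lambda,p)$ so large that $Ke^{-\lambda\sqrt\ell/K}\le\tfrac14$ and $De^{-\lambda\sqrt\ell/D}\le\tfrac14$, and put $N=\ell+2$. On $\{0\text{ good}\}$ choose a good path $0=x_0,x_1,x_2,\dots$ (say, the lexicographically minimal one); its nearest-neighbor realization $x_0,\,x_0+e_1,\,x_1,\,x_1+e_1,\,x_2,\dots$ uses open edges only, reaches $x_N$ in exactly $2N$ steps passing only through levels $1,\dots,N$ after time $0$, and $X_{2N}=x_N$ is again a good point since the shifted sequence $x_N,x_{N+1},\dots$ is a good path. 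Because each such edge has conductance $1$ while the total conductance out of any vertex of $\mathbb Z^2$ is at most $e^\lambda+e^{-\lambda}+2$ (here $\omega\le 1$), every prescribed step has quenched probability at least $c_2:=(e^\lambda+e^{-\lambda}+2)^{-1}>0$; decomposing over the (deterministic, pairwise disjoint) candidate trajectories and integrating over the environment then gives $\mathbb P_\lambda(E_1)\ge c_2^{2N}q_0>0$, where $E_1$ is the event that $0$ is good and the walk follows these $2N$ prescribed steps and $q_0:=P(0\text{ good})>0$ for $p$ close to $1$ (a standard supercritical oriented-percolation estimate, see \cite{BGP2003speed}). On $E_1$ we have $X_{2N}$ good, at level $N$, with $\max_{0\le i\le 2N}(X_i-X_{2N})\cdot e_1=0$.

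On $E_1$ the hypotheses of \eqref{probbackstepping2} are met at $n=2N$ (the site is good, the overshoot is $0<\sqrt\ell$), so with conditional probability at least $1-Ke^{-\lambda\sqrt\ell/K}\ge\tfrac34$ the walk reaches, at a fresh epoch $\sigma>2N$, a good point with $X_\sigma\cdot e_1\ge N+\ell/6$, before ever visiting level $N-\ell$; hence $X_m\cdot e_1\ge N-\ell+1=3$ for $2N\le m\le\sigma$. Applying \eqref{probbackstepping} at the fresh, good-point time $\sigma$ gives, with conditional probability at least $1-De^{-\lambda\sqrt\ell/D}\ge\tfrac34$, that $(X_m-X_\sigma)\cdot e_1>-\ell$ for all $m\ge\sigma$, whence $X_m\cdot e_1\ge X_\sigma\cdot e_1-\ell+1\ge 3+\ell/6\ge 1$ for $m\ge\sigma$. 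Since also $X_k\cdot e_1\ge 1$ for $1\le k\le 2N$ on $E_1$, on the intersection of these three events one has $X_n\cdot e_1\ge 1$ for all $n\ge 1$, and chaining the three bounds through the Markov property yields
\[
\mathbb P_\lambda\big(X_n\cdot e_1\ge 1\ \text{for all }n\ge 1\big)\ \ge\ c_2^{2N}\,q_0\cdot\tfrac34\cdot\tfrac34\ =:\ c(\lambda,p)\ >\ 0 ,
\]
with $c$ independent of $\kappa$ because none of $\ell,N,c_2,q_0,K,D$ is.

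The step I expect to be the main obstacle is the chaining in the last paragraph. The estimates \eqref{probbackstepping2} and \eqref{probbackstepping} are annealed --- conditional on the walk's history \emph{together with} the event that the current site is good --- and their derivation from Lemma \ref{lem:preconduc} passes through the distributional trap bound of Lemma \ref{lem:trapsize}, so one has to verify that further conditioning on the climbing event $E_1$ does not interfere. This is precisely why $E_1$ is engineered so that at time $2N$ both the ``good site'' and the ``zero overshoot'' conditions hold automatically, and $\sigma$ is by construction a fresh epoch at a good point; granting this, the passage from the conditional form of \eqref{probbackstepping2}, \eqref{probbackstepping} to the nested conditioning used above is routine.
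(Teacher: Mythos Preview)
Your argument is correct and follows the same overall strategy as the paper's proof --- first force the walk to reach a good point at a sufficiently high level with probability bounded below independently of $\kappa$, then use the escape estimates coming from Lemma~\ref{lem:preconduc} to keep the walk strictly to the right of the origin --- but the technical implementation of the second step is different. The paper sets $\ell_0=N$, $\ell_{i+1}=\tfrac{13}{12}\ell_i$, defines an infinite nested sequence of events $A_i$ by iterating \eqref{probbackstepping3} at the good fresh epochs $t_i$, and shows that on $\bigcap_i A_i$ the lower envelope $X_{t_i}\cdot e_1-\ell_i$ increases to infinity; the Borel--Cantelli type summability of $Ke^{-\lambda\sqrt{\ell_i}/K}$ then gives the positive lower bound. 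You instead exploit that \eqref{probbackstepping} already packages the ``never backtrack by $\ell$'' conclusion in one stroke, so a single application of \eqref{probbackstepping2} (to reach a fresh good epoch $\sigma$ a bit further right) followed by one application of \eqref{probbackstepping} at $\sigma$ suffices; this yields a finite three-event chain rather than an infinite one. Your approach is more economical and makes the $\kappa$-independence explicit (only the upper bound $\omega\le 1$ enters via $c_2$); the paper's iterative scheme has the advantage of using only \eqref{probbackstepping3} and is closer to the original argument in \cite{BGP2003speed}. A minor further difference: the paper's initial climb goes straight to $(N,0)$ and then conditions on $(N,0)$ being good, while you climb along the good staircase itself, which guarantees $G(X_{2N})$ automatically. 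Your concern about the nested conditioning is well placed, but note that the paper's proof has exactly the same issue when bounding $\mathbb{P}_\lambda(A_i^c)$ conditionally on $A_0\cap\cdots\cap A_{i-1}$, and treats it with the same level of informality.
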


\textbf{Proof:} 
Let $\ell_0=N$ be a positive integer and $\ell_{i+1}=13 \ell_i/12$ for $i\geq 1$. Define recursively the times $t_0=N$, $t_{i+1}=\tau_{t_i}'(X_{t_i}\cdot e_1+\ell_i/6)$ and the events
\begin{align*}
A_0 = \{ X_N = (N,0) \text{ and } (N,0) \text{ is a good point } \}
\end{align*}
and
\begin{align*}
A_i = \{ \tau_{t_i}'(X_{t_i}\cdot e_1+\ell_i/6) < \tau_{t_i}(X_{t_i}\cdot e_1-\ell_i) \} . 
\end{align*}
Then $\mathbb{P}_\lambda (A_0)=c_N>0$ and by \eqref{probbackstepping3},
\begin{align*}
\mathbb{P}_\lambda (A_i^c)\leq K e^{-\lambda \sqrt{\ell_i } /K} .
\end{align*} 
Therefore,
\begin{align*}
\mathbb{P}_\lambda \left( \bigcap_{i=0}^\infty A_i \right) \geq  c_N(1-Ce^{-\lambda \sqrt{N}/K }) ,
\end{align*}
which is positive for $N$ large enough. When all of the events $A_i$ occur, then $t_i<\infty$ for all $i$ and if $m\geq t_i$,
\begin{align*}
X_m\cdot e_1 > X_{t_i}\cdot e_1-\ell_i \geq X_{t_0}\cdot e_1-\ell_0 + \frac{1}{12}\sum_{j=1}^{i-1} \ell_j \geq \frac{N}{12} \left( \frac{13}{12}\right)^{i-1} ,
\end{align*} 
which implies in particular $X_n\cdot e_1 \geq 1 \text{ for all } n\geq 1$. 
\hfill $\Box$

We now introduce a regeneration structure, slightly different from the one used to prove Theorem \ref{thm:monotone}. Recall that $n$ is a fresh epoch, if $X_n\cdot e_1>X_k\cdot e_1$ for all $k<n$. If $n$ is a fresh epoch and additionally, $X_n\cdot e_1<X_k\cdot e_1$ for all $k>n$, we call $n$ a regeneration and we denote by $R_n$ the $n$-th regeneration time.

For $z\in \mathbb{Z}^2$, let $\omega_z^+= \{\omega_z(x,y) :\, x\sim y, x\cdot e_1\geq z\cdot e_1 \}$ be the environment to the right of $z$. The following lemma is standard in the theory of random walks in random environments, see 
\cite{sznizer1999}.

\begin{lem} \label{lem:regenerations2}
The sequence 
\begin{align*}
\big( (X_{R_n+k}-X_{R_n})_{k\geq0}, \omega_{R_n}^+ \big)_{n\geq 1}
\end{align*}
is stationary and ergodic. Moreover, the distribution of $((X_{R_n+k}-X_{R_n})_{k\geq0}, \omega^+_{R_n})$ is given by the distribution of $((X_k)_{k\geq0}, \omega^+_{0})$ under $\mathbb{P}_\lambda$, conditioned on $\{ X_n\cdot e_1 \geq 1 \text{ for all } n\geq 1\}$.
\end{lem}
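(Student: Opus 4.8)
The plan is to establish Lemma \ref{lem:regenerations2} by the classical argument that regeneration times split the trajectory into pieces that depend on disjoint portions of the environment, together with the fact that the sequence of these pieces is i.i.d.\ conditioned on the non-backtracking event. First I would record the key measurability observation: at a regeneration time $R_n$, the future increments $(X_{R_n+k}-X_{R_n})_{k\ge0}$ and the future environment $\omega_{R_n}^+$ are determined by the walk's behaviour in the half-space $\{z:\, z\cdot e_1\ge X_{R_n}\cdot e_1\}$, because by definition the walk never revisits $\{z:\,z\cdot e_1 < X_{R_n}\cdot e_1\}$ after time $R_n$, and by the fresh-epoch/regeneration structure it also never visited that half-space's boundary hyperplane after stepping past it. Conversely, the past $(X_k)_{k\le R_n}$ together with $\{R_1,\dots,R_n\}$ is determined by the environment in the complementary half-space. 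The two halves of the environment are independent under the i.i.d.\ law $P$, so conditioning on the past, the law of the future is the law of a fresh walk in a fresh half-space environment, conditioned only on the constraint that makes $R_n$ a regeneration, namely that the walk never backtracks below its starting level.

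Then I would carry out the standard strong-Markov-at-a-stopping-time bookkeeping. One checks that $R_n$ is a stopping time with respect to the natural filtration generated by $(X_k)$ and the part of $\omega$ already explored; one shows $\mathbb{P}_\lambda(R_n<\infty)=1$ — which is exactly where Lemma \ref{lem:neverreturn} is used, since positivity of $\mathbb{P}_\lambda(X_n\cdot e_1\ge1\ \forall n\ge1)$ together with ballisticity (from Lemma \ref{lem:displacement}, giving $X_n\cdot e_1\to\infty$) guarantees infinitely many regenerations almost surely. Given finiteness, the identity
\begin{align*}
\mathbb{P}_\lambda\big( (X_{R_n+k}-X_{R_n})_{k\ge0}\in\cdot\,,\ \omega_{R_n}^+\in\cdot \ \big|\ \mathcal{G}_{R_n}\big)
= \mathbb{P}_\lambda\big( (X_k)_{k\ge0}\in\cdot\,,\ \omega_0^+\in\cdot\ \big|\ X_n\cdot e_1\ge1\ \forall n\ge1\big)
\end{align*}
follows by decomposing on the explored finite piece and using independence of the two half-space environments, where $\mathcal{G}_{R_n}$ is the $\sigma$-algebra of the past up to $R_n$. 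Since the right-hand side does not depend on $n$ or on the conditioning data, this simultaneously gives that the post-$R_n$ blocks are i.i.d.\ with the stated law and that the shift-indexed sequence is stationary; ergodicity then follows from the fact that a stationary sequence built from i.i.d.\ blocks is mixing, hence ergodic, under the shift.

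The main obstacle is the measurability/decoupling step: unlike the super-regeneration construction in Section \ref{sec:coupling}, here one must verify carefully that a genuine regeneration — defined purely through the walk via fresh epochs that are also never revisited from the right — really does decouple the two half-space environments. The subtlety is that a priori, whether time $n$ is a regeneration depends on the entire future trajectory and hence potentially on the full environment; the resolution is the usual one of first conditioning on a fresh epoch (which is measurable with respect to the past) and then observing that, on the event that this fresh epoch is a regeneration, the future trajectory lives in the right half-space, so the event and the future are jointly measurable with respect to $\omega_{R_n}^+$. This requires the extra step built into the regeneration definition here, and I would spell out that, because $R_n$ is a fresh epoch, the walk at time $R_n$ sits at a point all of whose previously-visited predecessors have strictly smaller first coordinate, so the half-space $\{z\cdot e_1\ge X_{R_n}\cdot e_1\}$ indeed contains the whole future path. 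All remaining points — that $R_n$ is a.s.\ finite, that the blocks have finite expected length and displacement — are either immediate from Lemmas \ref{lem:displacement} and \ref{lem:neverreturn} or can be quoted from \cite{sznizer1999}.
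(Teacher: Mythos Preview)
The paper does not prove this lemma: it declares the result standard and cites \cite{sznizer1999}. Your outline is precisely the Sznitman--Zerner argument that citation points to, so there is nothing substantive to compare.

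Two small remarks on the sketch itself. First, $R_n$ is \emph{not} a stopping time for any filtration built from the past of the walk and the explored environment, since the regeneration condition involves the entire future; you acknowledge this correctly in your final paragraph, so simply drop the earlier sentence asserting that ``one checks that $R_n$ is a stopping time.'' Second, note that \cite{sznizer1999} treats site-based i.i.d.\ environments, whereas here the transition kernel at a site $x$ depends on all conductances incident to $x$; in particular the edge $\{X_{R_n}-e_1,X_{R_n}\}$ enters both the last step of the past (the step into $X_{R_n}$) and the normalising sum for the first step of the future out of $X_{R_n}$. Your claim that past and future live on disjoint, independent halves of the environment is therefore slightly too strong in the conductance setting --- this is presumably why the lemma (and the paper's use of it) asserts only stationarity and ergodicity rather than full independence of the blocks --- though the stated conclusion is unaffected.
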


It follows from Lemma \ref{lem:regenerations2} that $v(\lambda)$ exists and is nonzero if and only if $\mathbb{E}_\lambda [R_2-R_1]<\infty$ and in this case
\begin{align} \label{speedperc}
v(\lambda) = \frac{\mathbb{E}_\lambda [X_{R_2}-X_{R_1}]}{\mathbb{E}_\lambda [R_2-R_1]} .
\end{align}
Since $(X_{R_2}-X_{R_1})\cdot e_1\geq 1$, the inequality \eqref{nonmonotonicitylowerbound} follows then from 
\begin{align*}
\mathbb{E}_\lambda [R_2-R_1]\leq C
\end{align*}
with a constant $C=C(\lambda,p)$ independent of $\kappa$. This inequality follows by the same arguments as Lemma 8 in \cite{BGP2003speed}, making use of Lemma \ref{lem:preconduc}, Lemma \ref{lem:displacement} and Lemma \ref{lem:neverreturn}.

\subsubsection{An upper bound for $v_1(\lambda_2)$}

The upper bound \eqref{nonmonotonicityupperbound} follows from the fact that for small values of $\kappa$, the random walk will spend a long time in dead ends of the percolation cluster. To be more precise, let $I(x)$ be the connected component of $x$ connected by open edges (i.e., with conductance 1). We call $x\in \mathbb{Z}^2$ the beginning of a dead end, if $x$ belongs to the infinite cluster to its left, but not to the infinite cluster to its right, i.e., $I(x)\cap \{z:\, (z-x)\cdot e_1<0\}$ is infinite but $I(x)\cap \{z:\, (z-x)\cdot e_1\geq 0\}$ is finite. The dead end starting at $x$ is the finite set $I(x)\cap \{z:\, (z-x)\cdot e_1\geq 0\}$. Let $A$ be a dead end starting at the origin and $d(A)=\max\{z\cdot e_1:\, z\in A\}$ the depth of $A$. The time spent in $A$ will be denoted by
\begin{align} \label{timeintraps}
T_A = \inf\{n\geq 1:\ X_{n}\cdot e_1\leq 0 \} . 
\end{align}
If there is no dead end at the origin, set $A=\emptyset, d(A)=0$ and $T_A=0$. For an environment $\omega$ with $\omega(x,y)\in \{\kappa,1\}$ for $x\sim y$, let $\bar \omega$ be the environment obtained from $\omega$ by setting $\kappa=0$. We use the coupling introduced in Section \ref{sec:coupling} and denote by $(\bar{X}_n)_n$ the random walk in the environment $\bar \omega$. 
It was shown in \cite{BGP2003speed}, that there exists a $\lambda_u<\infty$, such that for $\lambda>\lambda_u$, $\mathbb{E}_\lambda [\bar T_A]=\infty$, when $\bar T_A$ is the time $\bar X_n$ spends in $A$. In the following, fix such a $\lambda$. We claim that
\begin{align}\label{timeintraps2}
\lim_{\kappa \to 0} \mathbb{E}_\lambda[T_A] =  \infty . 
\end{align}
Indeed, as in \eqref{couplingerror},
\begin{align} \label{couplingerror3}
P_{\omega,\lambda} (X_n-X_{n-1} \neq \bar X_n-\bar X_{n-1} |\, X_{n-1}= \bar X_{n-1}= x ) \leq C \kappa 
\end{align}
for all $n\geq 1$ and $x \in \mathbb{Z}^2$. Let
\begin{align*}
D = \inf\{ n \geq 1:\, X_n-X_{n-1} \neq \bar X_n-\bar X_{n-1} \} . 
\end{align*}
Since \eqref{couplingerror3} holds independent of $x$, $D$ can be coupled with a geometric distributed random variable $G$ with mean $(C\kappa)^{-1}$ independent of $T_A$ such that $D\geq G$. Therefore, 
\begin{align*}
\mathbb{E}_\lambda[T_A] \geq \mathbb{E}_\lambda[\bar T_A \wedge D] \geq \mathbb{E}_\lambda[\bar T_A \wedge G] \xrightarrow[\kappa\to 0]{} \mathbb{E}_\lambda[\bar T_A ]=\infty .
\end{align*}
Next, we define a sequence of ladder times $L_0,L_1,\dots $ with $L_0=0$ and let $A_0$ be the dead end starting at the origin (possibly empty). Inductively, let $L_{i+1}$ be the first fresh epoch with $X_{L_{i+1}}\cdot e_1> X_{L_i}\cdot e_1 + d(A_i)$ and let $A_{i+1}$ be the dead end beginning at $X_{L_{i+1}}$. Since $X_n$ is transient to the right, there are infinitely many ladder times. Note that $L_{i+1}-L_i\geq T_{A_i}$ and the random variables $T_{A_1},T_{A_2},\dots $ are iid under $\mathbb{P}_\lambda$ and satisfy \eqref{timeintraps2}. Additionally, the random variables $X_{L_{i+1}}\cdot e_1- X_{L_{i}}\cdot e_1=d(A_i)+1$ are iid and have exponential moments (independent of $\kappa$) by Lemma \ref{lem:trapsize}. This implies for the speed
\begin{align*}
v_1(\lambda) = \lim_{n\to \infty} \frac{X_{L_n}\cdot e_1}{L_n} \leq \lim_{n\to \infty} \frac{\sum_{i=0}^n d(A_i)+1}{\sum_{i=0}^n T_{A_i}} \leq \frac{C}{\mathbb{E}_\lambda[T_A]} .
\end{align*}
Letting $\kappa \to 0$, we obtain \eqref{nonmonotonicityupperbound} by \eqref{timeintraps2}. This completes the proof of Theorem \ref{thm:nonmonotone}.

{\bf Acknowledgement}
We thank Andrew Barbour for helpful discussions about the proof of 
Lemma \ref{lem:deltato0}.
Support of DFG (grant GA 582/8-1) is gratefully acknowledged.

\bigskip

\bibliographystyle{alpha}
\bibliography{biber}

{\footnotesize

Noam Berger: Technische Universit\"at M\"unchen,
Fakult\"at f\"ur Mathematik,
Boltzmannstra\ss e~3, 
85748~Garching bei M\"unchen,
Germany, noam.berger@tum.de\\

Nina Gantert: Technische Universit\"at M\"unchen,
Fakult\"at f\"ur Mathematik,
Boltzmannstra\ss e~3, 
85748~Garching bei M\"unchen,
Germany, gantert@ma.tum.de\\

Jan Nagel: 
Eindhoven University of Technology,
Department of Mathematics and Computer Science,
P.O. Box 513,
5600 MB Eindhoven, 
the Netherlands,
J.H.Nagel@tue.nl\\  
}

\end{document}